\documentclass[a4paper,11pt]{article}


\usepackage[ansinew]{inputenc}
\usepackage{amsmath}
\usepackage{amsfonts}
\usepackage{amssymb}
\usepackage{amsthm}
\usepackage{graphicx}
\usepackage{color}
\usepackage{multirow}
\usepackage{enumerate}
\usepackage[Lenny]{fncychap}
\usepackage{microtype}
\usepackage{fancyhdr}
\usepackage[all]{xy}
\usepackage{url}
\usepackage{varioref}
\usepackage{fancyvrb}
\usepackage{listings}
\usepackage{float}
\usepackage{eso-pic}

\lstdefinelanguage{Magma}
{
keywords={for,end,if,then,while,function,return,cat,&,and,or },
morekeywords={Seqset,Setseq,Polytope,AutomorphismGroup,RowSequence,IdentifyGroup,
	      Subgroups,PermutationMatrix,Generators,MatrixGroup,Transpose},
sensitive=false,
morecomment=[l]{//},
morecomment=[s]{/*}{*/},
morestring=[b]",
}

\lstnewenvironment{codice_magma}[1][]
{\lstset{basicstyle=\small\ttfamily, columns=fullflexible,
language=Magma,
keywordstyle=\color{red}\bfseries,
commentstyle=\color{blue},
numbers=left, numberstyle=\tiny,
stepnumber=2, numbersep=5pt, frame=single, #1}}{}


\bibliographystyle{plain}
\DeclareGraphicsExtensions{.pdf,.png}


\frenchspacing


\newtheoremstyle{mystyle}
  {}
  {}
  {}
  {}
  {\bfseries}
  {}
  {\newline}
  {}

\newtheorem{thm}{Theorem}
\newtheorem{lem}[thm]{Lemma}

\newtheorem{prop}[thm]{Proposition}
\newtheorem{coroll}[thm]{Corollary}
\newtheorem{remark}[thm]{Remark}


\newcommand{\C}{\mathbb{C}}
\newcommand{\R}{\mathbb{R}}
\newcommand{\Q}{\mathbb{Q}}

\newcommand{\Z}{\mathbb{Z}}

\DeclareMathOperator{\id}{Id}
\DeclareMathOperator{\h}{h}

\DeclareMathOperator{\Ker}{Ker}

\DeclareMathOperator{\Aut}{Aut}

\DeclareMathOperator{\Pic}{Pic}

\DeclareMathOperator{\Fix}{Fix} 
\DeclareMathOperator{\e}{e}

\renewcommand{\P}{\mathbb{P}}

\renewcommand{\H}{H}

\newcommand{\spn}[1]{\left < #1 \right >}

\newcommand{\ti}[1]{\tilde{#1}}
\newcommand{\du}[1]{#1^o}
\newcommand{\Ch}{Ch}
\newcommand{\A}{A}
\newcommand{\D}{D}
\newcommand{\Spec}{Spec}

\DeclareMathOperator{\age}{age}


\title{A Closer Look at Mirrors and Quotients of Calabi-Yau Threefolds}
\author{Gilberto Bini, Filippo Francesco Favale}
\date{\today}

\begin{document}

\maketitle
\begin{abstract}
\noindent Let $X$ be the toric variety $(\P^1)^4$ associated with its four-dimensional polytope $\Delta$. Denote by $\ti{X}$ the resolution of the singular Fano $\du{X}$ variety associated with the dual polytope $\du{\Delta}$. Generically, anticanonical sections $Y$ of $X$ and anticanonical sections $\ti{Y}$ of $\ti{X}$ are mirror partners in the sense of Batyrev. Our main result is the following: the Hodge-theoretic mirror of the quotient $Z$ associated to a maximal admissible pair $(Y,G)$ in $X$ is not a quotient $\ti{Z}$ associated to an admissible pair in $\ti{X}$. Nevertheless, it is possible to construct a mirror orbifold for $Z$ by means of a quotient of a suitable $\ti{Y}$. Its crepant resolution is a Calabi-Yau threeefold with Hodge numbers $(8,4)$. Instead, if we start from a non-maximal admissible pair, in same cases, its mirror is the quotient associated to an admissible pair.
\end{abstract}


\section{Introduction}

\noindent Let $T$ be a Calabi-Yau threefold, i.e., a compact K\"ahler manifold with trivial canonical bundle and no holomorphic $1$-forms. As an example, take a generic anticanonical section $Y$ of a smooth toric Fano fourfold $X$. Following Batyrev's seminal article (\cite{Bat94}), there exists a mirror partner of $Y$, which is given as follows. The toric fourfold $X$ is associated with a polytope in four-dimensional real space. The dual polytope yields another toric variety $\du{X}$, which is not smooth in general. Take a toric resolution $\ti{X}$ associated to a maximal projective subdivision the fan of $\du{X}$. A generic anticanonical section $\ti{Y}$ of $\ti{X}$ is a mirror partner of a generic anticanonical section of $X$. Here by a mirror partner we mean only a Hodge-theoretic mirror partner, i.e., the relevant Hodge numbers of $Y$ and $\ti{Y}$ are exchanged, namely:
$$
h^{1,1}(Y)=h^{2,1}(\ti{Y}), \quad h^{1,1}(\ti{Y})=h^{2,1}(Y).
$$

\noindent Let $G$ be a finite group of automorphisms of $X$ that acts freely on $Y$. It is easy to check that $Z:=Y/G$ is a Calabi-Yau threefold with non-trivial fundamental group. It is natural to ask whether, for each pair $(Y,G)$, there exists a finite group $\ti{G} \subset \Aut(\ti{X})$ such that $\ti{Z}:=\ti{Y}/\ti{G}$ is a mirror partner of $Z$ - at least Hodge-theoretically. 
\vspace{2mm}

\noindent In Section (\ref{SEC:ANTI}), we give a negative answer to the question above if the groups acts with the highest possible order. More precisely, let $X=(\P^1)^4$ and consider pairs $(Y,G)$, where $Y$ is a Calabi-Yau threefold in $X$ and $G$ is a finite group of automorphisms of $X$ that acts freely on $Y$ with the maximum possible order. These pairs are first investigated in (\cite{BFNP13}), where they are called {\em maximal admissible pairs}.  For each of them there exists a Calabi-Yau threefold $Z=Y/G$ with non-trivial fundamental group and relevant Hodge numbers $(\h^{1,1},\h^{1,2})=(1,5)$. A (Hodge-theoretic) mirror partner of $Z$ should have 

\begin{equation}
\label{zedmirror}
h^{1,1}(\ti{Z})=5, \quad h^{1,2}(\ti{Z})=1
\end{equation}
and height $6$, where the height is defined as the sum of the two Hodge numbers above. 
\vspace{2mm}

\noindent In this setting, $X$ is the toric Fano fourfold associated with the hypercube. Its dual is the $16$-cells (or hyperoctahedron) that yields a singular toric variety. If we take a maximal projective subdivision of the hypercube, we have a smooth toric variety, which is not Fano because the anticanonical bundle is semi-ample and not ample; however, this resolution is crepant. In other words, an anticanonical section $\ti{Y}$ of the resolution is a smooth Calabi-Yau, which is a mirror of $Y$. 
\vspace{2mm}

\noindent In order to find subgroups of $\Aut(\ti{X})$, we recall the structure of the automorphism group of a toric variety (cfr. \cite{Cox95}). This is given by the combinatorics of the fan, as well as by the dense torus in $\ti{X}$. The former yields a unique possible group in order to satisfy (\eqref{zedmirror}); the latter helps us to describe all possible families of finite groups acting on $\ti{Y}$.  None of them acts without fixed points; furthermore, the fixed locus is the union of some curves. Therefore, a mirror - if it exists - can not be found in this way. Of course, the quotients by the groups described above have finite quotient singularities, and there exists a mirror Calabi-Yau orbifold that satisfies (\eqref{zedmirror}).
\vspace{2mm}

\noindent If we take an crepant resolution of the orbifold mentioned before, we get a smooth Calabi-Yau manifold with height $12$ and $(\h^{1,1},\h^{1,2})=(8,4)$. This is a mirror partner (at least Hodge-theoretically) of a quotient in (\cite{BF11}), which is a Calabi-Yau manifold with cyclic fundamental group of order four: see Remark (\ref{osette}).
\vspace{2mm}

\noindent Finally, in Section (\ref{SEC:MIR}), we prove that it is indeed possible to find Hodge-theoretic mirror pairs by taking quotients associated to admissible pairs in $X$ and in $\ti{X}$ if the admissible pair in $X$ is not maximal. More precisely, we construct three pair $(Z_i,\ti{Z}_i)$ of Hodge-theoretic Calabi-Yau mirrors with $Z_i$ associated to an admissible pair in $X$ and $\ti{Z}_i$ associated to an admissible pair in $\ti{X}$.
\vspace{2mm}

\noindent {\bf Acknowledgments.} This work is part of the second named author's Ph.D thesis. We are grateful to the anonymous referee of the thesis for suggestions and helpful remarks, especially on the chapter related to the material presented here. The first named author is partially supported by FIRB 2012 "Moduli spaces and Applications" and PRIN 2010-2011 "Geometry of Algebraic Varieties". Both authors are partially supported by MIUR and GNSAGA.

\section{Automorphisms of Resolutions of the Toric Dual of $(\P^1)^4$}
\label{SEC:AUTORES}
\noindent Let $X$ be $(\P^1)^4$. $X$ is a Fano fourfold and it is the toric variety associated to the polytope in $4$-dimensional real space $\Delta=[-1,1]^4$, the hypercube. The fan $\Sigma$ of $X$ is spanned by the faces of the dual polytope $\du{\Delta}$ of $\Delta$, i.e., the $16-$cells that can be realized as the convex hull of the points $\{\pm \e_i\}_{i=1,\dots,4}$, where $e_i$ form the canonical basis of ${\mathbb R}^4$.
\begin{figure}[H]
\begin{center}
\includegraphics[width=.8\textwidth]{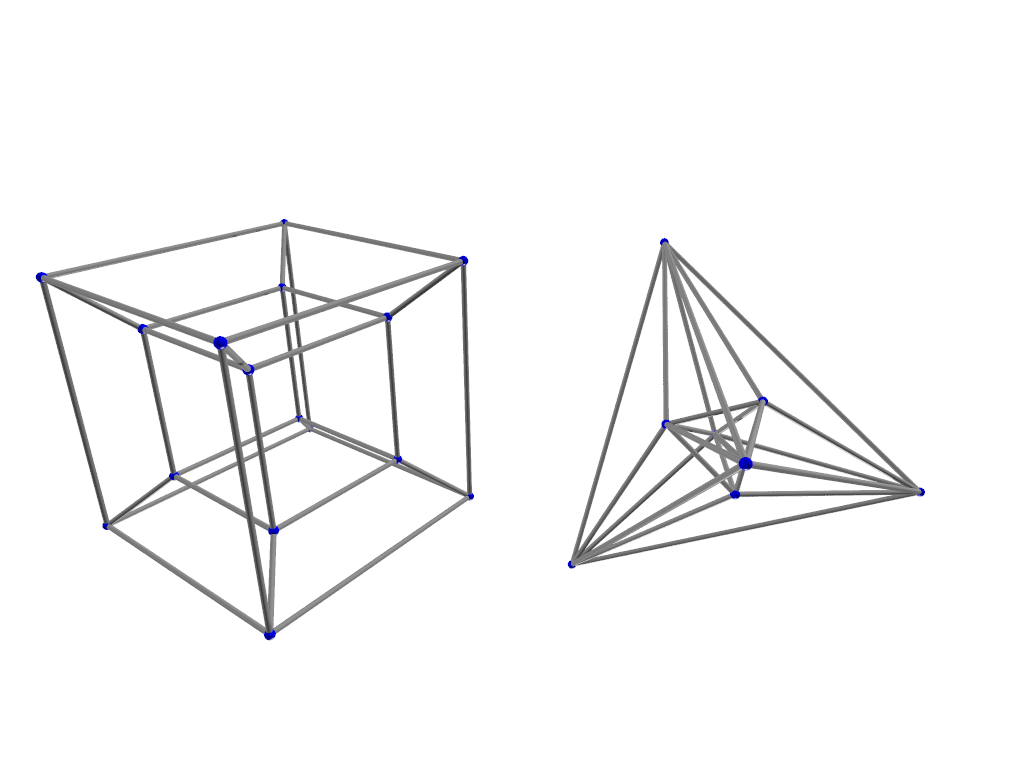}
\caption{A representation of a hypercube and its dual: the $16-$cells.}
\end{center}
\end{figure}

\noindent As shown in (\cite{BF11}) and (\cite{BFNP13}), there exists a smooth and $G-$invariant anticanonical section $Y$ with $G$ of order $16$ that acts freely on $Y$. 
\vspace{2mm}

\noindent The Fano variety dual to $X$ is associated to the fan spanned by the faces of $\Delta$, i.e., the hypercube. Unfortunately, it is a strongly singular variety: indeed, its fan $\du{\Sigma}$ is neither smooth nor simplicial. For example, the cone whose primitive generators are $$(1,1,1,1)\quad(1,1,1,-1)\quad(1,1,-1,1)\quad(1,1,-1,-1)$$ is not simplicial and the same is true for all the cones of dimension $3$ and $4$. In order to describe the mirror Calabi-Yau of $Y$, it is necessary to choose a maximal projective subdivision $\ti{\Sigma}$ of $\du{\Sigma}$. By definition, every maximal projective subdivision fulfills the following conditions:
$$\ti{\Sigma}(1)=(\Z^4\cap \du{(\du{\Delta})})\setminus \{0\} = (\Z^4\cap \Delta)\setminus \{0\}.$$
There are $81$ integral points in $\Delta$: $16$ vertices of the hypercube (which correspond to the rays of $\Sigma$), $32$ points that are in the middle of an edge, $24$ that correspond to centers of $2-$faces (squares), $8$ that are centers of one of the $8$ cubes of the hypercube;
\begin{figure}[H]
\begin{center}
\includegraphics[width=.8\textwidth]{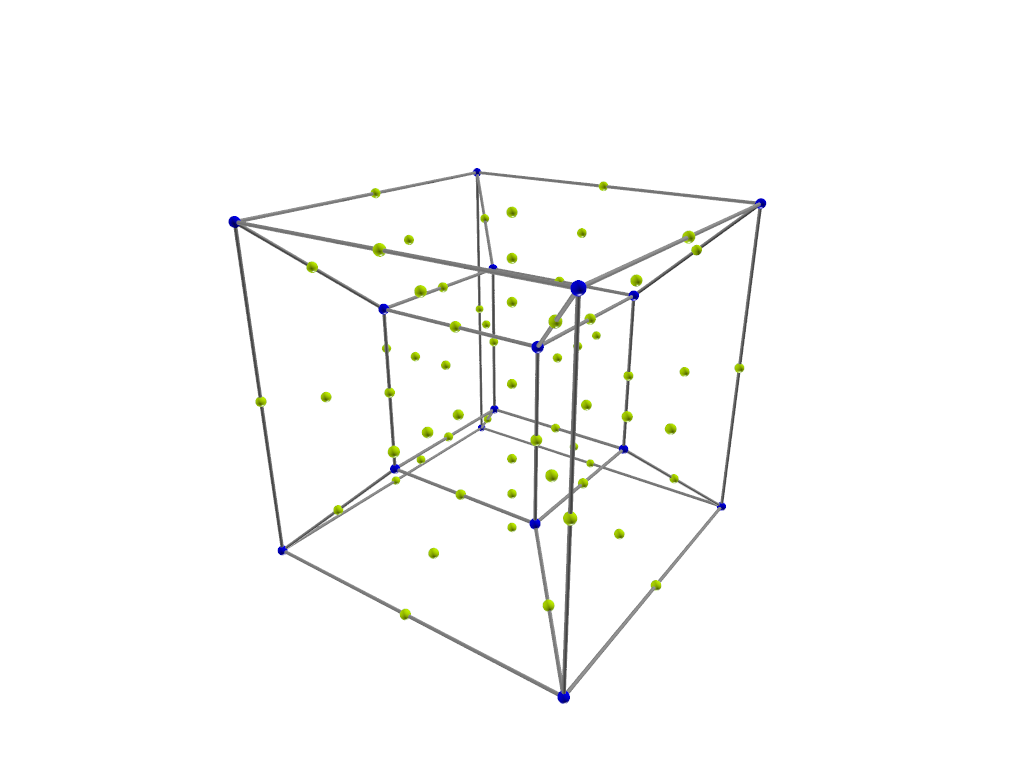}
\caption{Integral points in the hypercube.}
\end{center}
\end{figure}
\noindent Finally, we have the center of the hypercube which is the origin. This implies that every maximal subdivision of $\du{\Sigma}$ will have $80$ rays.
\vspace{2mm}

\noindent Given a toric variety $X$ associated to a fan $\Sigma$, we recall that its homogeneous coordinate ring $S$ is the ring $\C[x_{\rho}\, : \, \rho \in \Sigma(1)]$ with a particular grading on $\A_{n-1}(X)$. If $D=\sum_{\rho}a_{\rho}D_{\rho}\in \A_{n-1}(X)$, we define for brevity $x^{D}:=\Pi_{\rho}x_{\rho}^{a_{\rho}}$. The grading on $S$ is given by the condition 
$$\deg(x^{D})=\deg(x^{E})\Longleftrightarrow D=E\, \mbox{ in }\, \A_{n-1}(X).$$
One of the most important thing about $S$ is that it only depends on the rays of the fan $\Sigma$. Different fans with the same rays give the same homogeneous coordinate ring. We recall that a root of $X$ is a pair $(x_{\rho},x^D)$ such that $x_{\rho}$ is different from $x^D$ but they have the same degree. For each root it is possible to find a $1-$parameter group of automorphisms of the corresponding toric varieties. By definition, another family of automorphisms of toric varieties are those coming from the torus $T$. If $X$ is simplicial, there is only another elementary type of automorphisms, those obtained by the symmetries of the fan $\Sigma$. We denote this group by $\Aut(N,\Sigma)$, where $N$ is the lattice such that $N\otimes\R=\R^4$ is the vector space where the fan lies. For simplicial varieties there is an exact sequence 
\begin{equation}
\label{EQ:DEMAZURE}
1\rightarrow \Aut(X)_0\rightarrow \Aut(X)\rightarrow \frac{\Aut(N,\Sigma)}{\Pi_{i}S_{i}}\rightarrow 1 
\end{equation}
due to Demazure (smooth case) and Cox (simplicial case). $S_i$ is a permutation group on the partition of $\{\rho\}_{\rho\in \Sigma(1)}$ given by the equivalence relation $\rho_1 \sim \rho_2$ iff $\deg{x_{\rho_1}}=\deg{x_{\rho_2}}$.
\begin{lem}
Assume that $\ti{\Sigma}$ is a maximal projective resolution of $\du{\Sigma}$ and call $\ti{X}$ the corresponding toric variety. Then, $\Aut(\ti{X})$ is an algebraic group of dimension $4$ whose connected component that contains the identity is the torus $T$, i.e., $\ti{X}$ has no roots. Moreover, the exact sequence (\ref{EQ:DEMAZURE}) becomes
\begin{equation}
\label{EQ:DEMAZURE2}
1\rightarrow T\rightarrow \Aut(X)\rightarrow \Aut(\Z^4,\ti{\Sigma})\rightarrow 1.
\end{equation}\end{lem}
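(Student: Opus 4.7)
The approach is to reduce both statements to the single claim that $\ti{X}$ has \emph{no roots} in the Cox sense. Once this is established, the Demazure--Cox sequence (\ref{EQ:DEMAZURE}) yields everything at once: on one hand $\Aut(\ti{X})_0$ is generated by the torus $T$ together with the one-parameter subgroups attached to roots, so it collapses to $T$ and has dimension $4$; on the other hand, two distinct rays $\rho_1\neq\rho_2$ with $\deg x_{\rho_1}=\deg x_{\rho_2}$ would already give a root pair $(x_{\rho_1},x_{\rho_2})$, so the absence of roots forces every equivalence class in the partition of $\ti{\Sigma}(1)$ by degree to be a singleton. Each $S_i$ is then trivial and (\ref{EQ:DEMAZURE}) simplifies to the claimed (\ref{EQ:DEMAZURE2}).

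To rule out roots, the plan is to use the lattice characterization: a root corresponds to an $m\in M=\Z^4$ such that $\langle m,\rho_0\rangle=-1$ for a unique ray $\rho_0\in\ti{\Sigma}(1)$ and $\langle m,\rho\rangle\geq 0$ for all other rays $\rho$. The key observation is that the hypercube $\Delta=[-1,1]^4$ is centrally symmetric, so $\ti{\Sigma}(1)=(\Z^4\cap\Delta)\setminus\{0\}$ is closed under $\rho\mapsto -\rho$. Hence for every ray $\rho\neq \pm\rho_0$ both $\pm\rho$ appear among the non-negativity constraints on $m$, forcing $\langle m,\rho\rangle = 0$.

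This orthogonality system is strong enough to force $m=0$, contradicting $\langle m,\rho_0\rangle=-1$. Indeed, if $\rho_0\notin\{\pm e_1,\dots,\pm e_4\}$ then every standard basis vector appears among the orthogonality constraints and $m=0$ at once; if instead $\rho_0=\pm e_i$, the constraints from $\{\pm e_j : j\neq i\}$ confine $m$ to $\Z e_i$, but the lattice point $e_i+e_j\in \ti{\Sigma}(1)$ (for any $j\neq i$) is distinct from $\pm\rho_0$ and imposes $m_i=0$ too. Either way $m=0$, which is the desired contradiction.

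The main obstacle is not geometric but bookkeeping: one must carefully translate the paper's definition of roots via pairs $(x_\rho,x^D)$ into the lattice-theoretic characterization used above, and then verify that the absence of roots actually collapses the full quotient $\Aut(N,\Sigma)/\prod_i S_i$ appearing in (\ref{EQ:DEMAZURE}) to $\Aut(\Z^4,\ti{\Sigma})$. The geometric heart of the argument---the central symmetry of $\ti{\Sigma}(1)$---is then essentially a one-line observation.
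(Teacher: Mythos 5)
Your argument is correct, and while it shares the paper's overall architecture --- everything is reduced to the single claim that $\ti{X}$ has no roots, and the triviality of $\Pi_i S_i$ is then deduced exactly as in the paper, since two distinct rays of equal degree would themselves form a root pair --- the way you rule out roots is genuinely different. The paper invokes the counting formula expressing the number of roots of a simplicial Gorenstein toric variety as $\sum_{\Gamma} l^*(\Gamma)$ over the facets of $\Delta_{-K_{\ti{X}}}=\du{\Delta}$, and then observes that the $16$-cells has no lattice points other than its vertices and the origin. You instead work directly with the lattice description of a root as a pair $(\rho_0,m)$ with $\langle m,v_{\rho_0}\rangle=-1$ and $\langle m,v_{\rho}\rangle\geq 0$ for every other ray, and exploit the central symmetry $\ti{\Sigma}(1)=-\ti{\Sigma}(1)$ of the ray set $(\Z^4\cap\Delta)\setminus\{0\}$ to force $\langle m,v_{\rho}\rangle=0$ for all $\rho\neq\pm\rho_0$; your two-case check (using $\pm\e_j$ and, when $\rho_0=\pm\e_i$, the extra ray $\e_i+\e_j$) correctly shows these orthogonality constraints annihilate $m$, contradicting $\langle m,v_{\rho_0}\rangle=-1$. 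Your route is more elementary and self-contained --- it needs only the definition of a root, the completeness of the fan (to get $\langle m,v_{\rho_0}\rangle=-1$ from the translation of the pair $(x_{\rho},x^D)$ into lattice terms, which you rightly flag as the bookkeeping step), and the symmetry $\rho\mapsto-\rho$, which it exposes as the real reason there are no roots. The paper's route is shorter on the page but leans on a cited formula; the two are in fact two faces of the same computation, since roots with distinguished ray $\rho_0$ correspond to lattice points interior to the face of $\du{\Delta}$ cut out by $\langle m,v_{\rho_0}\rangle=-1$. I see no gap in your proposal.
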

\begin{proof}
It suffices to show that there are no roots. Standard facts of toric geometry imply that the number of roots of a simplicial and Gorenstein variety is given by the sum over the facet $\Gamma$ of $\Delta_{-K_{\ti{X}}}$ of $l^*(\Gamma)$, i.e., the number of integral points in $\Gamma$ that are not on its topological border. $\ti{\Sigma}$ is a maximal projective subdivision and this implies that the polytope associated to the divisor $-K_{\ti{X}}$ is the polytope of $\du{X}$. It is easy to see that the $16-$cells\footnote{Recall that $\du{\Delta}=\Ch(\{\pm \e_1,\pm \e_2,\pm \e_3,\pm \e_4\})$.} contain no integral points besides its vertices and the origin. This implies that there are no roots and that the connected component that contains the identity is given by the torus.
\vspace{2mm}


\noindent To prove the second statement, we need to show that for every pair $(\rho_1,\rho_2)$ of distinct indices of rays (for brevity, we will often write $\rho$ meaning a ray of a fan or its primitive generator $v_{\rho}$), $D_{\rho_1}$ is different from $D_{\rho_2}$ inside $\A_{3}(\ti{X})$. Assume that $(\rho_1,\rho_2)$ is such a pair. Then, by definition, $\deg{x_{\rho_1}}=\deg{x_{\rho_2}}$ and $(x_{\rho_1},x^{D_{\rho_2}})=(x_{\rho_1},x_{\rho_2})$ is a root. But we have shown that $\ti{X}$ has no roots so the partition given by $\sim$ is the trivial one. In particular, $\Pi_i S_i$ is the trivial group.
\end{proof}

\noindent We have shown that, by changing the projective subdivision, one can change the part of $\Aut(\ti{X})$ that depends on the automorphism of the fan. This depends strongly on how much the subdivision is invariant under the hyperoctahedral group $B_4$ that is the group of the automorphisms of the hypercube and of the fan spanned by its faces. In particular, $\Aut(\Z^4,\ti{\Sigma})$ will be a subgroup of $B_4$.
\vspace{2mm}

\noindent An example of maximal projective subdivision that preserves the full symmetry of the hypercube is the one that has, as maximal cones, the cones spanned by a flag in the hypercube. By a {\it flag}, we denote a tetrahedron with one vertex on the center of one of the $8$ cubes of $\Delta$ (call it $C$), another vertex on the center of a $2-$face of $C$ (denote the face by $F$), the third vertex on an edge $E$ of $F$ and the final vertex on one of the two end-point of $F$, i.e. on a vertex of the hypercube. The set of such flags cover the hypercube and two flags meets only along a lower dimensional face of the two tetrahedra. This implies that the set of the cone spanned by the flags generate a fan with $384$ facets (one for each flag). By construction, the set of the primitive generators of each maximal cone - which are the vertices of a flag in the hypercube - is a $\Z-$basis for $\Z^4$. This imply that the resolution is smooth and is a maximal subdivision of $\du{\Sigma}$. It is also easy to see 
that the corresponding toric variety is projective by constructing an explicit support function iteratively. We will call this subdivision the {\it flag subdivision} of $\du{X}$.
\begin{figure}[H]
\begin{center}
\includegraphics[width=.8\textwidth]{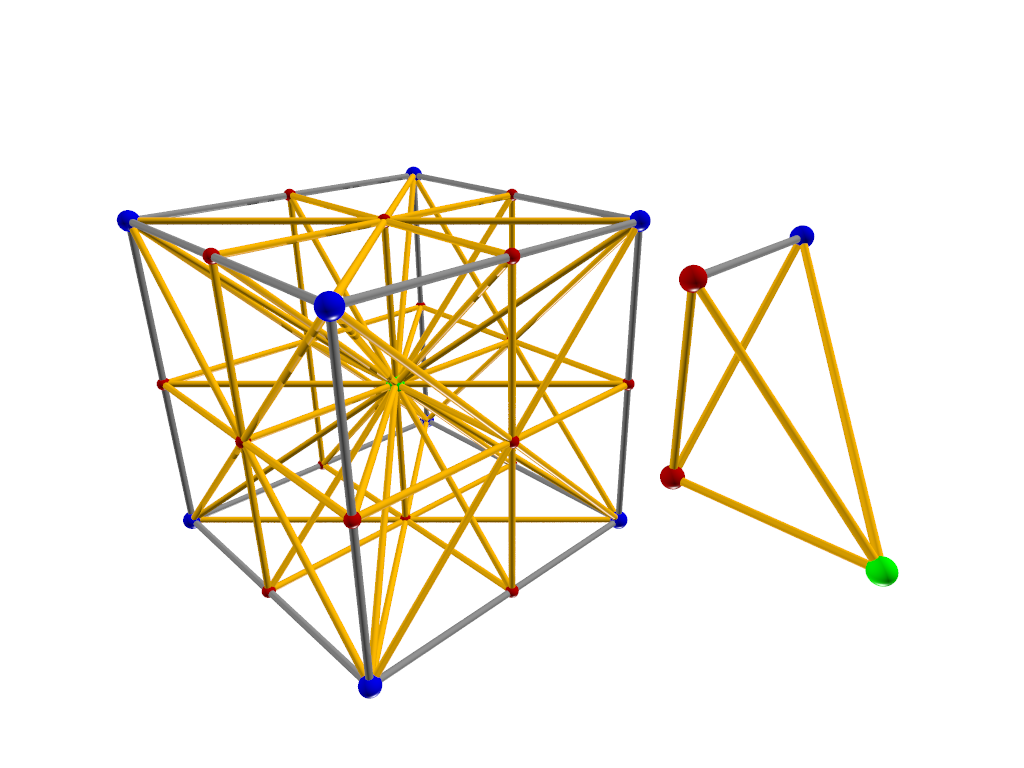}
\caption{The fan of the hypercube has $8$ maximal cones. In the picture we show the slice of the subdivision of the cone corresponding to the cube with center $(1,0,0,0)$ with the plane $x_1=1$. On the left, one of its $64$ tetrahedra (each tetrahedron corresponds to one of the maximal cones of the flag subdivision).}
\end{center}
\end{figure}

\section{Choosing the Group}
\label{SEC:CHOSING}
\noindent To compute the invariants of the action of a subgroup of $\Aut(\ti{X})$ on the cohomology of $\ti{X}$, it is sufficient to consider the action of its projection on $\Aut(\Z^4,\ti{\Sigma})$; in fact, $T$ acts trivially. Let $K$ be a subgroup of $B_4$ and assume that a $K-$invariant subdivision has been chosen. Call $\ti{Y}$ the generic element of $|-K_{\ti{X}}|$ that is a smooth Calabi-Yau that it is the mirror of $Y$. Assume moreover that $\ti{Y}$ is $\ti{G}-$invariant with $\ti{G}\leq \Aut(\ti{X})$ such that the image of $\ti{G}$ in $\Aut(\Z^4,\ti{\Sigma})$ is $K$. Since $\ti{X}$ is simplicial (i.e., it has only orbifold singularities), the Picard group has finite index in $\A_{3}(\ti{X})$. This implies that $\A_{3}(\ti{X})\otimes\Q=\Pic(\ti{X})\otimes\Q=\Pic_{\Q}(\ti{X})$. Consider the exact sequence 
\begin{equation}
\label{EQ:PIC}
\xymatrix{
0 \ar[r] & \Z^4 \ar[r]^-{\alpha} & \Z^{\ti{\Sigma}(1)}:=\bigoplus_{\rho\in \ti{\Sigma}} \Z \ar[r]^-{\beta} & \A_{3}(\ti{X}) \ar[r] & 0} 
\end{equation}
where $\alpha(m)=(<v_{\rho},m>)_{\rho}$ and $\beta((a_\rho)_{\rho})=\sum_{\rho}a_\rho D_{\rho}$. By tensorizing with $\Q$, we obtain the exact sequence
\begin{equation}
\label{EXSEQ:PICTENSOR}
\xymatrix{
0 \ar[r] & \Q^4 \ar[r]^-{\alpha} & \Q^{80} \ar[r]^-{\beta} & \Pic_{\Q}(\ti{X}) \ar[r] & 0},
\end{equation}
from which we deduce that $\Pic_{\Q}(\ti{X})$ has dimension $76$.
\vspace{2mm}

\noindent Since $\ti{Y} \subset \ti{X}$, there is a map from $\Pic_{\Q}(\ti{X})$ to $\Pic_{\Q}(\ti{Y})$. By standard facts in toric geometry (see, for instance, \cite{buc}), the kernel of this map is generated by the classes $D_{\rho}$ of divisors associated to edges of $\du{(\du{\Delta})}=\Delta$ spanned by integral vectors, which are in the interior of some facets of $\Delta$. In our case, $\Delta$ is the hypercube and the toric divisors in the kernel are the ones corresponding to the centers of the $8$ cubes of $\Delta$. Therefore, the dimension of the kernel in $8$. Consider the exact sequence
\begin{equation}
\label{EQ:exseq}
\xymatrix{
0 \ar[r] & \Ker \ar[r] & \Pic_{\Q}(\ti{X}) \ar[r] & \Pic_{\Q}(\ti{Y}) \ar[r] & 0}.
\end{equation}

\noindent If we take the invariant subspaces in (\ref{EQ:exseq}), we obtain the exact sequence (see, for instance, \cite{Wei94})
\begin{equation}
\label{EQ:exseqinv}
\xymatrix{
0 \ar[r] & \Ker^K \ar[r] & \Pic_{\Q}(\ti{X})^K \ar[r] & \Pic_{\Q}(\ti{Y})^K \ar[r] & 0} 
\end{equation}
that allows us to calculate the invariant part of $\Pic_{\Q}(\ti{Y})$.
Since $\ti{Y}$ is a smooth Calabi-Yau, we have $\H^2(\ti{Y},\C)=\H^{1,1}(\ti{Y})$. We then need to search for a group $K$ of order $16$ such that the dimension of $\Pic_{\Q}(\ti{Y})^K$ is $5$. 
\vspace{2mm}

\noindent Up to conjugacy, the group $B_4$ has exactly $37$ subgroups of order $16$.
By means of a computer search, we can compute the dimension of the vector spaces in the previous exact sequence. The following table summarizes these results. For each group we specify also the GAP Index\footnote{GAP stands for ``Groups, Algorithms, Programming" and is a software for computational discrete algebra. It has a database for finite groups of order less than or equal to $2000$ (except $1024$). For example, groups of order $16$ are identified by a pair $(16,x)$ where $x$ is a progressive number. The GAP Index of a group of order $16$ is $x$.} by which it is possible to identify the isomorphism class of the group from the GAP database. For example, the group relative to line $7$ of the table, has GAP index $11$ that corresponds to $\D_8\times \Z_2$. 
\begin{small}
\begin{center}
\begin{tabular}{c|c|cccc|c}
\label{TAB:G16}
 & GAP Index & $(\Q^{4})^{K}$ & $(\Q^{80})^{K}$ & $\Pic_{\Q}(\ti{X})^K$ & $\Ker^{K}$ & $\Pic_{\Q}(\ti{Y})^K$ \\ \hline
1  &  14  &  0  &  15  &  15  &  4  &  11 \\
2  &  14  &  0  &  15  &  15  &  2  &  13 \\
3  &  3  &  0  &  9  &  9  &  2  &  7 \\
4  &  11  &  0  &  10  &  10  &  2  &  8 \\
5  &  3  &  0  &  9  &  9  &  2  &  7 \\ \hline
6  &  2  &  0  &  8  &  8  &  2  &  6 \\
7  &  11  &  0  &  10  &  10  &  2  &  8 \\
8  &  3  &  0  &  9  &  9  &  1  &  8 \\
9  &  3  &  0  &  10  &  10  &  2  &  8 \\
10  &  11  &  0  &  12  &  12  &  2  &  10 \\ \hline
11  &  10  &  0  &  11  &  11  &  3  &  8 \\
12  &  3  &  0  &  9  &  9  &  2  &  7 \\
13  &  10  &  0  &  11  &  11  &  2  &  9 \\
14  &  13  &  0  &  8  &  8  &  1  &  7 \\
15  &  11  &  0  &  10  &  10  &  1  &  9 \\ \hline
16  &  11  &  0  &  11  &  11  &  3  &  8 \\
17  &  14  &  0  &  15  &  15  &  3  &  12 \\
{\bf 18}  &  {\bf 6}  &  {\bf 0}  &  {\bf 6}  &  {\bf 6}  &  {\bf 1}  &  {\bf 5} \\
19  &  11  &  0  &  12  &  12  &  3  &  9 \\
20  &  11  &  0  &  14  &  14  &  3  &  11 \\ \hline
21  &  3  &  0  &  10  &  10  &  2  &  8 \\
22  &  11  &  0  &  12  &  12  &  2  &  10 \\
23  &  4  &  0  &  8  &  8  &  2  &  6 \\
24  &  11  &  0  &  11  &  11  &  2  &  9 \\
25  &  11  &  0  &  12  &  12  &  3  &  9 \\ \hline
26  &  11  &  0  &  12  &  12  &  1  &  11 \\
27  &  8  &  0  &  7  &  7  &  1  &  6 \\
28  &  7  &  0  &  9  &  9  &  1  &  8 \\
29  &  3  &  0  &  10  &  10  &  1  &  9 \\
30  &  11  &  0  &  13  &  13  &  3  &  10 \\ \hline
31  &  11  &  0  &  11  &  11  &  3  &  8 \\
32  &  11  &  0  &  13  &  13  &  3  &  10 \\
33  &  11  &  1  &  17  &  16  &  4  &  12 \\
34  &  11  &  1  &  17  &  16  &  3  &  13 \\
35  &  11  &  0  &  13  &  13  &  2  &  11 \\ \hline
36  &  11  &  0  &  11  &  11  &  2  &  9 \\
37  &  11  &  0  &  13  &  13  &  2  &  11 
\end{tabular}
\end{center}
\end{small}

\noindent As it is possible to see from the table, there is only one group (up to conjugacy) such that $\h^{1,1}(\ti{Y})=5$.
This group has GAP index $6$ and it is isomorphic to a particular semidirect product of $\Z_8$ and $\Z_2$, which is denoted by $M_{16}$ with the following presentation
$$M_{16}=\{g,h \, |\, g^8=h^2=\id\quad h^{-1}gh=g^{5}\}.$$
By direct inspection, one checks that $B_4$ has exactly $6$ subgroups isomorphic to $M_{16}$ and they are all conjugated.
Then the following result holds.
\begin{prop}
Assume that $\ti{\Sigma}$ is a maximal projective subdivision of $\Sigma$ such that $\ti{X}=X_{\ti{\Sigma}}$ admits a smooth anticanonical section $\ti{Y}$ and a group $\ti{G}$ with $\h^{1,1}(\ti{Y}/\ti{G})=5$. Then, $\ti{\Sigma}$ must have a $M_{16}-$symmetry.
\end{prop}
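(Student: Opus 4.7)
The plan is to reduce the statement to a finite computational check on subgroups of $B_4$ and then to read off the conclusion from the table compiled just above.

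First, I would observe that the torus $T\subset\Aut(\ti{X})$ acts trivially on cohomology, being connected; therefore the action of any finite $\ti{G}\leq\Aut(\ti{X})$ on $\Pic_{\Q}(\ti{Y})$ factors through its image $K$ in $\Aut(\Z^4,\ti{\Sigma})$, and the hypothesis $h^{1,1}(\ti{Y}/\ti{G})=5$ is equivalent to $\dim\Pic_{\Q}(\ti{Y})^K = 5$. Second, I would fix the order of $K$. In the setting of Section~\ref{SEC:CHOSING} the candidate group $\ti{G}$ has order $16$, matching the order of the group in the maximal admissible pair $(Y,G)$ on $X$; moreover one may take $\ti{G}\cap T=\{1\}$, since a non-trivial torus-contained subgroup would act non-trivially on the generic $\ti{Y}$ but impose no cohomological constraint. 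Thus $K\cong\ti{G}$ has order exactly $16$, so $K$ is one of the conjugacy classes of subgroups of order $16$ of $B_4$, of which there are exactly $37$.

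Third, for each such $K$ I would compute the four dimensions appearing in the exact sequence (\ref{EQ:exseqinv}): $\dim(\Q^4)^K$, $\dim(\Q^{80})^K$, $\dim\Ker^K$, and finally $\dim\Pic_{\Q}(\ti{Y})^K$. Each of these is a routine linear-algebra calculation (using the explicit linear action of $B_4$ on $\Z^4$ and the induced permutation action on the $80$ rays of the chosen $K$-invariant subdivision $\ti{\Sigma}$, for instance the flag subdivision), but carrying it out over all $37$ conjugacy classes is tedious and is best delegated to a computer algebra system such as GAP or Magma; its output is precisely the table reproduced above the statement. The concluding step is then pure inspection: the only line whose last column equals $5$ is line $18$, whose group has GAP index $(16,6)$ and is isomorphic to $M_{16}$. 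Since the six subgroups of $B_4$ isomorphic to $M_{16}$ form a single conjugacy class, any admissible $K$ is conjugate to $M_{16}$, so $\Aut(\Z^4,\ti{\Sigma})\supseteq M_{16}$, i.e. $\ti{\Sigma}$ carries an $M_{16}$-symmetry.

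The main obstacle is the exhaustive enumeration over the $37$ conjugacy classes and the reliable computation of each invariant dimension, which is practical only with computer algebra. A secondary subtlety is justifying that subgroups $K\leq B_4$ of proper order dividing $16$ do not furnish alternative realisations of the hypothesis; the cleanest way to dispose of this is to extend the same enumeration to those smaller orders and verify that none of them yields $\dim\Pic_{\Q}(\ti{Y})^K=5$ either, which follows from a monotonicity-type observation since shrinking $K$ can only increase the dimension of the invariants.
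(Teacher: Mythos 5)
Your proposal follows the paper's own argument essentially verbatim: reduce to the image $K$ of $\ti{G}$ in $\Aut(\Z^4,\ti{\Sigma})\leq B_4$ (the torus acting trivially on cohomology), compute $\dim\Pic_{\Q}(\ti{Y})^K$ via the invariant exact sequence for each of the $37$ conjugacy classes of order-$16$ subgroups by a computer search, and observe that only the class with GAP index $6$, i.e.\ $M_{16}$, yields $5$. The only caveat is your closing ``monotonicity'' remark: $K'\leq K$ gives $\dim V^{K'}\geq\dim V^{K}$, hence only a lower bound of $5$ for smaller groups rather than a proof that $5$ is never attained --- but this is immaterial here, since the proposition read in context fixes $|\ti{G}|=16$, and the fallback you also mention (extending the enumeration to smaller orders) would settle it anyway.
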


\noindent In what follows, $\ti{\Sigma}$ is a maximal projective subdivision of $\du{\Sigma}$ that has a $M_{16}-$symmetry. Without loss of generality, we can assume $$\spn{g,h}:=:K\leq \Aut(\Z^4,\ti{\Sigma})$$ with
\begin{equation}
\label{EQ:GROPGENS}
g=\begin{bmatrix}
 0 &-1 & 0 & 0 \\
 0 & 0 & 0 &-1 \\
-1 & 0 & 0 & 0 \\
 0 & 0 & 1 & 0 \\
\end{bmatrix}\quad h=\begin{bmatrix}
 1 & 0 & 0 & 0 \\
 0 &-1 & 0 & 0 \\
 0 & 0 &-1 & 0 \\
 0 & 0 & 0 & 1 \\
\end{bmatrix}. 
\end{equation}
\noindent It will be useful to observe that $M_{16}$ has three involutions, namely $(g^4, g^4h, h)$, and that they generate a subgroup of order $4$isomorphic to $\Z_2\oplus\Z_2$. Moreover, among the $3$ involutions, only $g^4$ has square roots (that are $g^2$ and $g^6$). Recall that we need a group $\ti{G}$ of order $16$ such that its projection on $\Aut(\Z^4,\ti{\Sigma})$ is $K$.
Now we will describe all such groups $\ti{G}$.
\vspace{2mm}

\noindent As before, call $T$ the maximal torus inside $\ti{X}$. Recall that $T$ is an open dense subset of $\ti{X}$ and that we can write it as the toric affine patch corresponding to the cone $\{0\}$ od $\ti{\Sigma}$, i.e., it has a realization inside $\ti{X}$ given by 
$$X_{\{0\}}=\Spec(\C[\{0\}^\vee\cap \Z^4])=\Spec(\C[x_1^{\pm 1},x_2^{\pm 1},x_3^{\pm 1},x_4^{\pm 1}]).$$
Choose coordinates $(\lambda_i)$ on $T$ such that the action of the torus on $T=X_{\{0\}}$ is given by
$$\begin{array}{ccc}
T\times X_{\{0\}} & \longrightarrow & X_{\{0\}}\\
((\lambda_i)_i\,,(x_i)_i) & \mapsto & (\lambda_ix_i)_i.
\end{array}$$
In coordinates, for every $\lambda \in T$ one has 
\begin{footnotesize}
$$\lambda:\left(x_1,x_2,x_3,x_4,\frac{1}{x_1},\frac{1}{x_2},\frac{1}{x_3},\frac{1}{x_4}\right)\mapsto
\left(\lambda_1x_1,\lambda_2x_2,\lambda_3x_3,\lambda_4x_4,\frac{1}{\lambda_1x_1},
\frac{1}{\lambda_2x_2},\frac{1}{\lambda_3x_3},\frac{1}{\lambda_4x_4}\right).$$
\end{footnotesize}

The action of $K$ on the points of $X_{\{0\}}$ can be deduced from that on the coordinates. For example, consider $g\in K$ whose action on the lattice $\Z^4=N=M^\vee$ is described by the matrix in (\ref{EQ:GROPGENS}). The action of its dual $g^\vee:N^\vee\rightarrow N^\vee$ is described by the transpose of the matrix in (\ref{EQ:GROPGENS}): it is a permutation on the set $\{\pm \e_i^\vee\}_i$. Here, we denote by $\e_i$ the elements of the standard $\Z-$basis of $N$ and $\e_i^\vee$ its dual. Now, recall that the coordinates $x_i^{\pm 1}$ on the torus are, by definition, of the form $\chi^{\pm\e_i^\vee}$. This allows us to deduce the action of $g$ (as of any other element of $\Aut(\Z^4,\ti{\Sigma})$) on the torus.
Thus the description of the action of $g$ and $h$ on $T$ (which will be denoted again by $g$ and $h$) are
\begin{footnotesize}
$$g:\left(x_1,x_2,x_3,x_4,\frac{1}{x_1},\frac{1}{x_2},\frac{1}{x_3},\frac{1}{x_4}\right)\mapsto
\left(\frac{1}{x_2},\frac{1}{x_4},\frac{1}{x_1},x_3,x_2,x_4,x_1,\frac{1}{x_3}\right)$$
$$h:\left(x_1,x_2,x_3,x_4,\frac{1}{x_1},\frac{1}{x_2},\frac{1}{x_3},\frac{1}{x_4}\right)\mapsto
\left(x_1,\frac{1}{x_2},\frac{1}{x_3},x_4,\frac{1}{x_1},x_2,x_3,\frac{1}{x_4}\right)$$
\end{footnotesize}
Two generators of the group that projects on $K$ are of the form $\{\lambda\circ h,\mu\circ g\}$, where $\lambda,\mu\in T$.
By direct inspection, one can check that $\spn{\lambda\circ h,\mu\circ g}$ is isomorphic to $M_{16}$ if and only if the following conditions are satisfied:
\begin{equation}
\left\lbrace
\begin{array}{l}
\lambda_1^2=\lambda_4^2=1, \\
\lambda_1\lambda_3\mu_1\mu_4/(\mu_2\mu_3)=1, \\
\mu_2\mu_3/(\lambda_3\lambda_4\mu_1\mu_4)=1, \\
\lambda_1\mu_1\mu_2/(\lambda_2\mu_3\mu_4)=1, \\
\lambda_2\lambda_4\mu_3\mu_4/(\mu_1\mu_2)=1,
\end{array}
\right.
\end{equation}
that is to say,
\begin{equation}
\left\lbrace
\begin{array}{l}
\lambda_1^2=1, \\
\lambda_2=\lambda_1\mu_1\mu_2/(\mu_3\mu_4), \\
\lambda_3=\mu_2\mu_3/(\lambda_1\mu_1\mu_4), \\
\lambda_4=\lambda_1.
\end{array}
\right.
\end{equation}
Thus, we have $2$ families (one for each choice of $\lambda_1\in\{\pm 1\}$) of groups the satisfies our requirements.
If one takes $\nu\in T$ to be $(m_1^{-1}c_2^{-1},c_2,m_4m_2^{-1}c_2^{-1},m_2^{-1}c_2^{-1})$ with $c^2=m_1^{-1}m_2^{-1}m_3m_4$ then
$$\nu^{-1}\circ (\lambda\circ h)\circ \nu = (\lambda_1,\lambda_1,\lambda_1^{-1},\lambda_1^{-1})\circ h,$$
$$\nu^{-1}\circ (\mu\circ g)\circ \nu = g,$$
with $\lambda_1^2=1$ so, up to conjugacy, there are only two subgroups of $\Aut(\ti{X})$ that project themselves on $K$:
$$L_1:=\spn{g,h}\quad \mbox{ and }\quad L_2:=\spn{g,(-1,-1,-1,-1)\circ h}.$$
Define $h_1:=h$ and $h_2:=(-1,-1,-1,-1)\circ h$, so that $L_i=\spn{g,h_i}$.

\section{Anticanonical $L_i-$Invariant Sections and Fixed Loci}
\label{SEC:ANTI}

\noindent In this section, we analyse invariant sections and their relation with the fixed locus of $L_1$ and that of $L_2$. For most of the section, the toric variety is a generic maximal projective resolution $\ti{X}$ that has the $M_{16}-$symmetry described before. For more details about fixed points we will, however, restrict to consider the flag resolution defined at the end of Section (\ref{SEC:AUTORES}).
\vspace{2mm}

\noindent First, we will analyse the invariant space of anticanonical sections of $L_1$ and $L_2$.
By standard toric geometry, $\H^0(\ti{X},-K_{\ti{X}})$ has dimension equal to $|\Z^4\cap \du{\Delta}|$. Moreover, from this set it is possible to write down explicitely a basis for the space in the coordinates of the torus $X_{\{0\}}$ (which we will identify with $T$). More precisely, given a divisor $D$ in a projective toric variety $X$ constructed from a polytope in a lattice $M$, there is an isomorphism between $\H^0(X,D)$ and the set 
$$L(D):=\bigoplus_{m\in \Delta_{D}\cap M}x^m\cdot \C,$$
where $\Delta_D$ is the polytope associated to the divisor $D$, $x_i$ are the coordinates of the torus and $x^m$ is a shorthand for $\Pi_{i}x_i^{m_i}$.
\vspace{2mm}

\noindent In our case, because $\ti{\Sigma}$ is a maximal projective subdivision, one has $\Delta_{-K_{\ti{X}}}=\du{\Delta}$, the $16-$cells. The integral points of $\du{\Delta}$, i.e. the points of the set $\du{\Delta}\cap \Z^4$, are the coordinate points, their opposite and the origin. Hence there are $9$ indipendent anticanonical sections and 
$$\H^0(\ti{X},-K_{\ti{X}})\simeq L(\du{\Delta})=\left\lbrace\sum_{i}a_ix_i+\sum_{i}b_i\frac{1}{x_i}+c\right\rbrace.$$
From this description, it is also easy to describe the invariant subspaces with respect to the actions of $L_1$ and $L_2$.
\vspace{2mm}

\noindent We will write $V_{a,b}^{(i)}$ to denote a subspace of $\H^0(\ti{X},-K_{\ti{X}})$ such that $g(v)=av$ and $h_i(v)=bv$ for every $v\in V_{a,b}^{(i)}$. By direct computation, we obtain
$$\H^0(\ti{X},-K_{\ti{X}})=V_{1,1}^{(1)}\oplus V_{-1,1}^{(1)}\oplus V_{i,1}^{(1)}\oplus V_{-i,1}^{(1)}\oplus W^{(1)},$$
$$\H^0(\ti{X},-K_{\ti{X}})=V_{1,1}^{(2)}\oplus V_{1,-1}^{(2)}\oplus V_{-1,-1}^{(2)}\oplus V_{i,-1}^{(2)}\oplus V_{-i,-1}^{(2)}\oplus W^{(2)},$$
where $W^{(i)}$ is a subspace that doesn't contains any invariant subspace of dimension $1$. Apart from $V_{1,1}^{(1)}$ which has dimension $2$, every other subspace of the form $V_{a,b}^{(i)}$ has dimension $1$.

\subsection{The group $L_1$}

The invariant subspace with respect of $L_1$ has dimension $2$ and it is generated by
$$I_0:=x_1+\frac{1}{x_1}+x_2+\frac{1}{x_2}+x_3+\frac{1}{x_3}+x_4+\frac{1}{x_4}\quad \mbox{ and }\quad I_1:=1.$$
The generic invariant section is then
\begin{multline}
s:=\frac{1}{x_1x_2x_3x_4}\left(ax_1x_2x_3x_4+b(x_1^2x_2x_3x_4+x_2x_3x_4+x_1x_2^2x_3x_4+x_1x_3x_4+\right.\\
\left. +x_1x_2x_3^2x_4+x_1x_2x_4+x_1x_2x_3x_4^2+x_1x_2x_3)\right) 
\end{multline}
whose zero locus can be proven to be smooth in $T$ for generic values of $a$ and $b$. Its closure inside $\ti{X}$ is an invariant anticanonical section. Let's focus on the fixed locus of $L_1$ on $T$. Denote by $V_4$ the subgroup of $L_1$ generated by $g^4$ and $h_1$. As observed before, it is isomorphic to $\Z_2\oplus\Z_2$ and consists of all the involutions of $L_1$ plus the identity. For this reason, the fixed locus of $L_1$ is that of $V_4$. In $K$, $g^4$ equals to $-\id$, so $g^4$ acts on $T$ by sending $x_i$ to $x_i^{-1}$. The fixed locus of $g^4$ inside $T$ consists of those points on the torus with coordinates $x_i$ such that
$$\left(x_1,x_2,x_3,x_4\right)=\left(\frac{1}{x_1},\frac{1}{x_2},\frac{1}{x_3},\frac{1}{x_4}\right),$$
i.e., the $16$ points with $x_i=\pm 1$. By direct inspection, the generic section $s$ doesn't vanish on any of these points.
\vspace{2mm}

\noindent The fixed points of $h_1$ are those for which
$$\left(x_1,x_2,x_3,x_4\right)=\left(x_1,\frac{1}{x_2},\frac{1}{x_3},x_4\right),$$
so they have $x_2,x_3\in \{\pm1\}$ and $x_1,x_4$ free. Every section meets each of the four component of the fixed locus in a curve that is smooth on $T$. A similar result is true for $g^4h_1$.
\vspace{2mm}

\noindent This shows that the generic $L_1-$invariant section meets the fixed locus of $L_1$ in $8$ curves and is smooth in $T$. The elements of $L_1$ that have fixed points (on $T$) on $V(s)$ are only $h$ and $g^4h$ because $g^4$ is the only involution that has a square root and it acts freely. It remains to consider what happens outside $T$, i.e., on the toric divisors of $\ti{X}$. 

\begin{lem}
\label{LEM:FIXPINT}
Let $P$ be a fixed point of $g$ in $\ti{X}$. Then $P$ lies in $T$.
\end{lem}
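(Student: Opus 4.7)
The plan is to combine the torus-orbit stratification of $\ti{X}$ with the single algebraic observation that $g^4=-\id$. Recall that $\ti{X}=\bigsqcup_{\sigma\in\ti{\Sigma}}O_\sigma$ with $O_{\{0\}}=T$ and $\dim O_\sigma = 4-\dim\sigma$, and that any $g\in\Aut(N,\ti{\Sigma})$ permutes these orbits according to the rule $g(O_\sigma)=O_{g(\sigma)}$. Consequently, if $P\in\Fix(g)\cap O_\sigma$ then $\sigma$ is necessarily a $g$-invariant cone of $\ti{\Sigma}$, and it suffices to show that the only such cone is $\{0\}$.

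The key input is the identity $g^4=-\id$ on $N=\Z^4$, which is immediate from (\ref{EQ:GROPGENS}) by tracking a basis vector: $e_1\mapsto -e_3\mapsto -e_4\mapsto e_2\mapsto -e_1$, and analogously on $e_2,e_3,e_4$. Once this is in hand, suppose $\sigma$ is a $g$-invariant cone of positive dimension and pick any nonzero vector $v\in\sigma$, for instance the primitive generator of a ray of $\sigma$. By $g$-invariance, $g^4(v)=-v$ lies in $g^4(\sigma)=\sigma$, so $\sigma$ contains both $v$ and $-v$, and hence the whole line $\R v$. This contradicts the strong convexity of cones in a fan.

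Therefore no $g$-invariant cone of positive dimension can exist, and the fixed point $P$ must lie in $T=O_{\{0\}}$. I do not expect any real obstacle: the argument is essentially a one-line consequence of $g^4=-\id$ together with strong convexity, and no finer property of the chosen maximal subdivision $\ti{\Sigma}$ (not even simpliciality) enters the proof.
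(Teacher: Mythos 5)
Your argument is correct and is essentially the paper's proof: both reduce the statement to the fact that $g^4=-\id$ on $N$ forces the only $g^4$-stable cone of $\ti{\Sigma}$ to be $\{0\}$, by strong convexity. The only (cosmetic) difference is that you phrase this via the orbit--cone correspondence ($P\in O_\sigma$ with $g(\sigma)=\sigma$), whereas the paper works with the affine cover and the identity $X_\sigma\cap X_{g^4(\sigma)}=X_{\sigma\cap(-\sigma)}=X_{\{0\}}=T$; both routes use the same key algebraic input and neither needs simpliciality of $\ti{\Sigma}$.
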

\begin{proof}
If $P$ is a fixed point of $g$, then it is also a fixed point for $g^4$. The toric variety $\ti{X}=X_{\ti{\Sigma}}$ is covered by affine patches $X_{\sigma}$ where $\sigma$ is a facet in $\ti{\Sigma}$. In particular, there exists $\sigma$ such that $P\in X_{\sigma}$. Call $\tau$ the cone that is the image of $\sigma$ by the involution $g^4$. Being a fixed point, $P$ will also be a point of $X_{\tau}$ because $g^4(X_{\sigma})=X_{\tau}$. We have then $$P\in X_{\sigma}\cap g^4(X_{\sigma})=X_{\sigma}\cap X_{\tau}=X_{\sigma\cap\tau}.$$
But $g^4$ acts on $\Z^4$ as $-\id$ so $\tau=g^4(\sigma)=-\sigma$. Being $\sigma$ a strictly convex rational cone one has then
$\sigma\cap(-\sigma)=\{0\}$. In the end, hence $P\in X_{\sigma\cap\tau}=X_{\{0\}}=T$.
\end{proof}

\begin{coroll}
For every maximal projective subdivision $\ti{X}$ of $\du{X}$ that admits a $K-$symmetry, the automorphism $g$ acts freely on the whole generic invariant anticanonical section.
\end{coroll}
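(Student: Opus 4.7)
The plan is to reduce the problem to a finite set of candidate fixed points on the torus and then invoke a transparent genericity argument. Since $g^4 \in \langle g \rangle$, every $g$-fixed point is automatically fixed by $g^4$; by Lemma \ref{LEM:FIXPINT} any such point lies in $T$. On $T$, the involution $g^4$ acts as $x_i \mapsto x_i^{-1}$, so its fixed locus on $T$ is the finite set
\[
\Fix_T(g^4) = \{(\epsilon_1,\epsilon_2,\epsilon_3,\epsilon_4) : \epsilon_i \in \{\pm 1\}\},
\]
which has exactly $16$ elements. In particular, $\Fix_{\ti{X}}(g)$ is contained in this set of $16$ points, regardless of which maximal projective subdivision $\ti\Sigma$ is chosen.

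The second step is to evaluate the generic invariant anticanonical section $s = a I_1 + b I_0$ at each of these $16$ points. Using $I_1 = 1$ and $I_0 = \sum_i(x_i + x_i^{-1})$, at a point with coordinates $(\epsilon_1,\ldots,\epsilon_4)$ one has $x_i + x_i^{-1} = 2\epsilon_i$, so
\[
s(\epsilon_1,\ldots,\epsilon_4) = a + 2b(\epsilon_1+\epsilon_2+\epsilon_3+\epsilon_4).
\]
This is a nontrivial linear form in $(a,b) \in \C^2$ for every sign choice, so the condition that $V(s)$ contain one of the $16$ points cuts out a finite union of proper linear subspaces of the parameter space. Outside this closed locus, $V(s)$ is disjoint from $\Fix_{\ti X}(g)$, which is exactly the statement of the corollary.

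Since the analysis depends only on the intrinsic shape of the generic invariant section (dictated by the polytope $\du{\Delta}$ and the $K$-symmetry, not by the chosen subdivision), the argument applies uniformly to every maximal projective subdivision $\ti{\Sigma}$ admitting a $K$-symmetry. There is no serious obstacle: the conceptual content has been absorbed into Lemma \ref{LEM:FIXPINT}, and what remains is a one-line evaluation at the $16$ candidate points together with the observation that a nonzero linear form on $\C^2$ vanishes on a proper subvariety.
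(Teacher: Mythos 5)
Your argument is correct and is essentially the paper's own: the paper establishes via Lemma \ref{LEM:FIXPINT} that any $g$-fixed point lies in $T$, notes that the fixed locus of $g^4$ on $T$ is the $16$ points with $x_i=\pm1$, and observes ``by direct inspection'' that the generic invariant section does not vanish at any of them. You have merely made that last inspection explicit by computing $s(\epsilon_1,\dots,\epsilon_4)=a+2b\sum_i\epsilon_i$ and noting it is a nonzero linear form in $(a,b)$ for each sign choice.
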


\noindent Assume now that $\ti{X}$ is the flag subdivision. Let's prove that the closure of the generic $V(s)$ is smooth in the affine patch $X_{\sigma}$ where $\sigma$ is the facet generated by the flag whose vertices are 
$$(1,0,0,0),(1,1,0,0),(1,1,1,0),(1,1,1,1).$$
The dual cone is generated by the following primitive vectors
$$(1,-1,0,0),(0,1,-1,0),(0,0,1,-1),(0,0,0,1)$$
so that the $\C-$algebra generated by the semigroup $\sigma^\vee\cap \Z^4$ is 
$$\C[x_1x_2^{-1},x_2x_3^{-1},x_3x_4^{-1},x_4]=\C[X_1,X_2,X_3,X_4].$$
The affine coordinates on $X_{\sigma}$ are then $\{X_1,X_2,X_3,X_4\}$.
The relations that allow to go from the $X_i$ to the coordinates on the torus are
$$\left\lbrace
\begin{array}{cl}
x_1= & X_1X_2X_3X_4\\
x_2= & X_2X_3X_4\\
x_3= & X_3X_4\\
x_4= & X_4
\end{array}
\right.$$
Making the substitution one has 
\begin{multline}
s\cdot x_1=:S=aX_1X_2X_3X_4 +b(X_1^2X_2^2X_3^2X_4^2 + X_1X_2^2X_3^2X_4^2 + \\
 + X_1X_2X_3^2X_4^2+ X_1X_2X_3X_4^2 + X_1X_2X_3 + X_1X_2 + X_1 + 1).      
\end{multline}
We multiply by $x_1$ because, in this way, one doesn't change the zero locus in $T$ (in fact $x_1$ is invertible in the coordinate ring of $T$) but allows, with the substitution, to cancel other components that are not in the closure.
By direct inspection one sees that $S$ is (for generic $a,b$) smooth. This shows that there exist a smooth Calabi-Yau threefold $\ti{Y}$ that is $L_1-$invariant (and that is also a mirror for $Y$). 
\vspace{2mm}

\noindent As a consequence of a result\footnote{If $X$ is a Calabi-Yau threefold and $\iota$ is a small involution acting on $X$ such that its fixed locus contains a curve, then $\iota$ is a symplectic automorphism of $X$} in \cite{Fav13}, we deduce that $L_1$ is a symplectic subgroup of $\Aut(\ti{Y})$. In fact $g$ acts freely on $\ti{Y}$ (by direct check inside the torus and using the Lemma for the outside) and $h$ is an involution that fixes curves. With this we are able to complete the proof of the following theorem.

\begin{thm}
\label{THM:MIRROR}
There exists a maximal projective subdivision $\ti{\Sigma}$ (i.e. the flag subdivision) of $\du{\Sigma}$ and its associated toric variety $\ti{X}$, a smooth Calabi-Yau threefold $\ti{Y}$ in $\ti{X}$ and a group $\ti{G}\leq \Aut(\ti{X})$ (namely $L_1$) such that:
\begin{itemize}
\item $\ti{Y}$ is $\ti{G}-$invariant and a mirror for $Y\in |-K_X|$;
\item $\ti{G}$ is symplectic as subgroup of $\Aut(\ti{Y})$ and acts with a fixed locus with irreducible components that are curves;
\item $Z:=\ti{Y}/\ti{G}$ is a Calabi-Yau orbifold whose singular locus has pure dimension $1$ and it has fundamental group isomorphic to $\Z_4$;
\item $Z$ is a topological mirror for $Y/G$, i.e. it has $(\h^{1,1}(Z),\h^{1,2}(Z))=(5,1)$.
\end{itemize}
\end{thm}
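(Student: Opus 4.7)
The theorem is an assembly of facts scattered above; my plan is to organize them bullet by bullet and supply what is still missing. First I would verify smoothness of the generic $L_1$-invariant anticanonical section $\ti{Y}$ everywhere. Smoothness on the torus $T$ and on the specific affine patch $X_\sigma$ associated with the flag $\{(1,0,0,0),(1,1,0,0),(1,1,1,0),(1,1,1,1)\}$ has already been checked. Since the flag subdivision is stabilised by the full hyperoctahedral group $B_4\supset L_1$, the $384$ maximal cones are permuted transitively by $B_4$; pulling the generic two-parameter section $s=aI_0+bI_1$ back via the appropriate lattice change carries the previous verification to every other affine patch verbatim. Genericity of $(a,b)$ then gives smoothness of $\ti{Y}$ on $\ti{X}$, and Batyrev's construction immediately yields the mirror property, settling the first bullet.

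Next I would treat the fixed locus and the symplectic assertion. The Corollary already gives that $g$ acts freely, and the explicit computation on $T$ shows that the only elements of $L_1$ with fixed points on $\ti{Y}$ are the involutions $h$ and $g^4h$, each fixing a disjoint union of curves; this is the fixed-locus part of the third bullet. To invoke the result from \cite{Fav13} quoted in the footnote, I need these two involutions to be \emph{small} (no isolated fixed components of codimension $\geq 2$); since their fixed loci on $T$ are pure one-dimensional and Lemma \ref{LEM:FIXPINT} together with the symplectic smoothness argument forbids new components outside $T$, this is immediate. The theorem of \cite{Fav13} then yields that $h$ and $g^4h$ are symplectic. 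As $g$ acts freely it is trivially symplectic, and since these elements generate $L_1$, the whole group is symplectic on $\ti{Y}$, completing the second bullet.

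For the third bullet, the orbifold statement follows from the fact that $L_1$ acts on the smooth $\ti{Y}$ with only pure one-dimensional fixed loci, so $Z=\ti{Y}/L_1$ has only finite quotient singularities of pure dimension $1$ and the canonical bundle descends (by the symplectic assertion) to give a Calabi-Yau orbifold. The fundamental group computation is the main non-routine step: using that Batyrev anticanonical hypersurfaces in smooth projective toric varieties are simply connected, one has
\[
\pi_1(Z)\;=\;L_1/N,
\]
where $N\trianglelefteq L_1$ is the normal subgroup generated by all elements of $L_1$ having a fixed point on $\ti{Y}$. By the classification of the preceding paragraph $N$ is generated normally by $h$ and $g^4h$; since $g^4$ is central (indeed $hgh^{-1}=g^5$ gives $hg^4h^{-1}=g^{20}=g^4$), the product $h\cdot g^4h=g^4$ lies in $N$, and a short enumeration shows $N=\langle g^4,h\rangle\cong\Z_2\oplus\Z_2$. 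The quotient has order $4$ and is generated by the image of $g$, which still has order $4$, so $\pi_1(Z)\cong\Z_4$ as required.

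Finally, for the Hodge numbers in the fourth bullet, $\h^{1,1}(Z)=\dim\Pic_{\Q}(\ti{Y})^{L_1}=5$ is exactly the entry of line $18$ of the table of Section \ref{SEC:CHOSING}. It remains to obtain $\h^{1,2}(Z)=1$: this I would compute via the Batyrev--Borisov-type description of $H^{2,1}(\ti{Y})$ as the space of polynomial deformations modulo automorphisms of $\ti{X}$, intersected with the $L_1$-invariant part. Concretely, the $L_1$-invariant sections form the two-dimensional space $\langle I_0,I_1\rangle$; quotienting by the one-dimensional subspace of deformations induced by the $L_1$-invariant part of the torus action (rescaling the overall equation) leaves a one-dimensional essential deformation space, giving $\h^{1,2}(Z)=1$. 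The main obstacle I anticipate is precisely this last verification: one must be careful that the $L_1$-invariant directions in the toric automorphism group act non-trivially on $\langle I_0,I_1\rangle$, so that the quotient really has the expected dimension one rather than two.
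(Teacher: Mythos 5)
Your overall architecture is the paper's: everything except $\h^{1,2}(Z)=1$ and $\pi_1(Z)\cong\Z_4$ is assembled from the preceding sections, and for those two items you use Batyrev's deformation theory and Armstrong's theorem on orbit spaces exactly as the paper does (your identification of the normal subgroup $N=\spn{g^4,h}\cong\Z_2\oplus\Z_2$ and the resulting $\Z_4$ quotient is the paper's computation verbatim). Two steps, however, are not right as written. In the fixed-locus discussion you invoke Lemma \ref{LEM:FIXPINT} to rule out components of $\Fix(h)$ and $\Fix(g^4h)$ outside the torus. That lemma applies only to $g$: its proof rests on $g^4$ acting as $-\id$ on the lattice, so that $\sigma\cap g^4(\sigma)=\{0\}$ for every strictly convex cone $\sigma$. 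For $h$ this fails --- $h$ fixes the rays $\pm\e_1,\pm\e_4$, so $\sigma\cap h(\sigma)$ can be a nonzero cone --- and the paper devotes all of Section 5 to showing that $\Fix(h_1)$ acquires no new components on the toric boundary. For the theorem itself you do not need that analysis: since $h_1$ fixes a curve in $T$ it is symplectic (this is what \cite{Fav13} supplies), and the fixed locus of a symplectic involution of a smooth Calabi--Yau threefold is automatically a disjoint union of smooth curves, which is all the second and third bullets require; but the route through Lemma \ref{LEM:FIXPINT} is a step that would fail. (Likewise, a free automorphism is symplectic by the holomorphic Lefschetz fixed point formula, not ``trivially''.)

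Second, your computation of $\h^{1,2}(Z)=1$ silently assumes $\H^{2,1}(\ti{Y})=\H^{2,1}_{poly}(\ti{Y})$. In Batyrev's theory there is a correction term $D=\sum_{\du{\Theta}} l^*(\du{\Theta})\,l^*(\du{\hat{\Theta}})$ over codimension-$2$ faces of $\du{\Delta}$ counting non-polynomial deformations, and the paper explicitly checks $D=0$ using the fact that the faces of the $16$-cells contain no interior lattice points; without that check the restriction to invariant polynomial deformations is unjustified. Your bookkeeping of the ``$-1$'' is also slightly off: it comes from the overall rescaling of the section (passing from $\H^0(\ti{X},-K_{\ti{X}})$ to the linear system), not from an $L_1$-invariant direction of the torus action --- the table in Section \ref{SEC:CHOSING} shows $(\Q^4)^K=0$, so the torus contributes nothing invariant and the obstacle you anticipate at the end does not arise. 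The count $2-1=1$ is correct once these points are in place.
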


\begin{proof}
The only thing left is that $Z$ is a topological mirror for $Y/G$, i.e. we have to prove that $\h^{1,2}(Z)=\h^{1,2}(\ti{Y})^{\ti{G}}=1$ and that its fundamental group is $\Z_4$. 
\vspace{2mm}

\noindent Following Batyrev's work we have that the space $\H^{2,1}(\ti{Y})=\H^{1}(\ti{Y},T_{\ti{Y}})$ classifies the infinitesimal deformation of $\ti{Y}$. It has a subspace
$$\H^{2,1}_{poly}(\ti{Y})\leq \H^{2,1}(\ti{Y}),$$
which parametrizes the deformation determined by the hypersurfaces $\ti{Y}\in |-K_{\ti{X}}|$. The other, possible, deformations of $\ti{Y}$ form a subspace of dimension
$$D:=\sum_{\du{\Theta}}l^*(\du{\Theta})l^*(\du{\hat{\Theta}}),$$
where $\du{\Theta}$ runs over all the faces of $\du{\Delta}$ of codimension $2$ and $\du{\hat{\Theta}}$ is the face of $\Delta$ dual to $\du{\Theta}$. But $\du{\Delta}$ is the $16-$cells and each face has no points in the relative interior so $D=0$. This shows that all the contribution to $\H^{2,1}(\ti{Y})$ cames from $\H^{2,1}_{poly}(\ti{Y})$.
\vspace{2mm}

\noindent Hence, it suffices to analyse the action of $L_1$ on $\H^{0}(\ti{X},-K_{\ti{X}})$. But, as we have seen before, 
$$\H^0(\ti{X},-K_{\ti{X}})\simeq L(\du{\Delta})$$
and the action of $L_1$ on this space is determined by the permutation of the points in $\du{\Delta}$. We have already calulated the invariant subspace and it has dimension $2$. This implies that $\H^{2,1}(\ti{Y})^{L_1}=2-1=1$.
\vspace{2mm}

\noindent In (\cite{BK06}), it has been calculated the fundamental group of the generic anticanonical section of a Fano fourfold. Only a few of the varieties analysed have non-trivial fundalmental group and the case which concern us now, the anticanonical section in the dual of $(\P^1)^4$, it is not one of those. This shows that $\du{Y}$ is simply connected and thus the same holds for $\ti{Y}$. By the main result of (\cite{Arm82}), the fundamental group of $\ti{Y}$ is isomorphic to the quotient of $\ti{G}$ by the group generated by the elements whose fixed locus is not empty. As we have seen the only automorphisms with fixed points on $\ti{Y}$ are $h_1$ and $g^4h_1$ so the fundamental group of $Z$ is $\ti{G}/\spn{h_1,g^4h_1}\simeq \Z_4$.
\end{proof}

\subsection{The Group $L_2$}

In this case, the decomposition of $\H^0(\ti{X},-K_{\ti{X}})$ doesn't have invariant spaces of dimension greater than one that are intersections of eigenspaces of elements of $L_2$. So, to each invariant subspace of dimension $1$ (there are $5$ of those) we associate an anticanonical section that is $L_2-$invariant. We will see in a moment that none of these is a mirror for $Y$: they are all singular.
\vspace{2mm}

\noindent Consider the subspace $V_{1,1}^{(2)}$. This is spanned by the section
$$s_{1,1}=1$$
so its zero locus on the torus is empty. The zero locus of the corresponding section on $\ti{X}$ is the union of the toric divisors; Hence it is not only highly singular, but also reducible.
\vspace{2mm}

\noindent The other invariant subspaces are of the form $V_{i^a,-1}^{(2)}$ for $a=0,\dots,3$. If we denote by $s_{i^a,-1}$ an element that spans $V_{i^a,-1}^{(2)}$, we can consider the following sections:
$$
s_{ 1,-1}:=\left(x_1+\frac{1}{x_1}\right)+\left(x_2+\frac{1}{x_2}\right)+\left(x_3+\frac{1}{x_3}\right)+\left(x_4+\frac{1}{x_4}\right)
$$
$$
s_{-1,-1}:=\left(x_1+\frac{1}{x_1}\right)-\left(x_2+\frac{1}{x_2}\right)-\left(x_3+\frac{1}{x_3}\right)+\left(x_4+\frac{1}{x_4}\right)
$$
$$
s_{ I,-1}:=\left(x_1+\frac{1}{x_1}\right)+i\left(x_2+\frac{1}{x_2}\right)-i\left(x_3+\frac{1}{x_3}\right)-\left(x_4+\frac{1}{x_4}\right)
$$
$$
s_{-I,-1}:=\left(x_1+\frac{1}{x_1}\right)-i\left(x_2+\frac{1}{x_2}\right)+i\left(x_3+\frac{1}{x_3}\right)-\left(x_4+\frac{1}{x_4}\right)
$$
One can show that the points of the torus with coordinates
$$(-1,-1,1,1),(1,1,-1,-1),(-1,1,-1,1),$$
$$(1,-1,1,-1),(1,-1,-1,1),(-1,1,1,-1)$$
are the singular points of $V(s_{1,-1})$ whereas
$$(1,1,1,1),(-1,-1,-1,-1),(-1,1,1,-1),(1,-1,-1,1)$$
are the singular points of $V(s_{i^a,-1})$ for $a=1,2,3$.
\vspace{2mm}

\noindent This concludes the case of $L_2$. We have also proved a refined version of Theorem (\ref{THM:MIRROR}): indeed,  we have also shown that if the pair $(\ti{Y},\ti{G})$ exists for some $\ti{X}$, as in the hypotesis of Theorem \ref{THM:MIRROR}, then $\ti{G}$ is uniquely determined by its conjugacy class.

\section{Fixed Locus of $L_1$ for the Flag Resolution}

By Theorem (\ref{THM:MIRROR}), we have seen that there exists a topological mirror for $Y/G$ and it is constructed as $Z=\ti{Y}/\ti{G}$ for suitable $\ti{Y}$ and $\ti{G}$. Furthermore, $Z$ is singular and all its deformations have at least the same singularities as $Z$: we have an equi-singular family of anticanonical quotients that is the (topological) mirror-family for $Y/G$. There is no hope for a smoothing to obtain a smooth mirror for $Y/G$ from $Z$. One can instead consider a crepant resolution. This exists because $\ti{Y}$ is a smooth Calabi-Yau threefold and $\ti{G}$ is a symplectic group. The crepant resolution is again a Calabi-Yau, it is smooth but it will have different Hodge numbers that are given by the dimension of the orbifold cohomology of $\ti{Y}/\ti{G}$. Now, we will analyse the fixed locus of the group $L_1$ action on the toric resolution associated to the flag subdivision and compute the orbifold cohomology for the quotient.
\vspace{2mm}

\noindent We recall that the fixed locus of $L_1$ on the torus $T$ is given by the union of the fixed loci of $h_1$ and $g^4h_1$. These two elements are the only ones that have non-empty fixed locus (on $\ti{Y}$). Both are symplectic involutions of $\ti{Y}$ and have fixed locus smooth of pure dimension $1$. If a curve of $\Fix(L_1)$ doesn't intersect $T$ then it is contained in $$\ti{Y}\cap \bigcup_{\rho} D_{\rho},$$
i.e., in the union of the toric divisors (restricted on $\ti{Y}$). Now, we will show that there are no more curves other than those we have found in $T$.
\vspace{2mm}

\noindent Assume that $P$ is a fixed point of $h_1$. The same methods apply also to $g^4h_1$. Call $\sigma$ a maximal cone of $\ti{\Sigma}$ such that $P\in X_{\Sigma}$. As we have done in Lemma (\ref{LEM:FIXPINT}), if $\tau=h(\sigma)$ we have 
$$P\in X_{\sigma}\cap X_{\tau}=X_{\sigma\cap\tau}.$$
In this case, however, it is indeed possible that $\sigma\cap \tau \neq \{0\}$ and these are the cases we have to investigate.
All the maximal cones of the flag subdivision have primitive generators for their rays of the form
$$\{\pm\e_i,\pm\e_i\pm\e_j,\pm\e_i\pm\e_j\pm\e_k,\pm\e_i\pm\e_j\pm\e_k\pm\e_l\}$$
where $\{i,j,k,l\}=\{1,2,3,4\}$. To have $\sigma\cap \tau\neq \{0\}$ it is necessary that $\pm \e_i$ is fixed by $h_1$. This happens if and only if $i\in\{1,4\}$. If that is the case, we have two further cases depending on whether $(i,j)$ is in 
$\{(1,4),(4,1)\}$ or not. If $(i,j)\in \{(1,4),(4,1)\}$ then $\sigma\cap \tau$ is the cone generated by $\pm\e_i$ and $\pm\e_i\pm\e_j$. In the other case $\sigma\cap \tau$ is a ray and has, as primitive generator, the vector $\pm\e_i$.
\begin{figure}[H]
\begin{center}
\includegraphics[width=.8\textwidth]{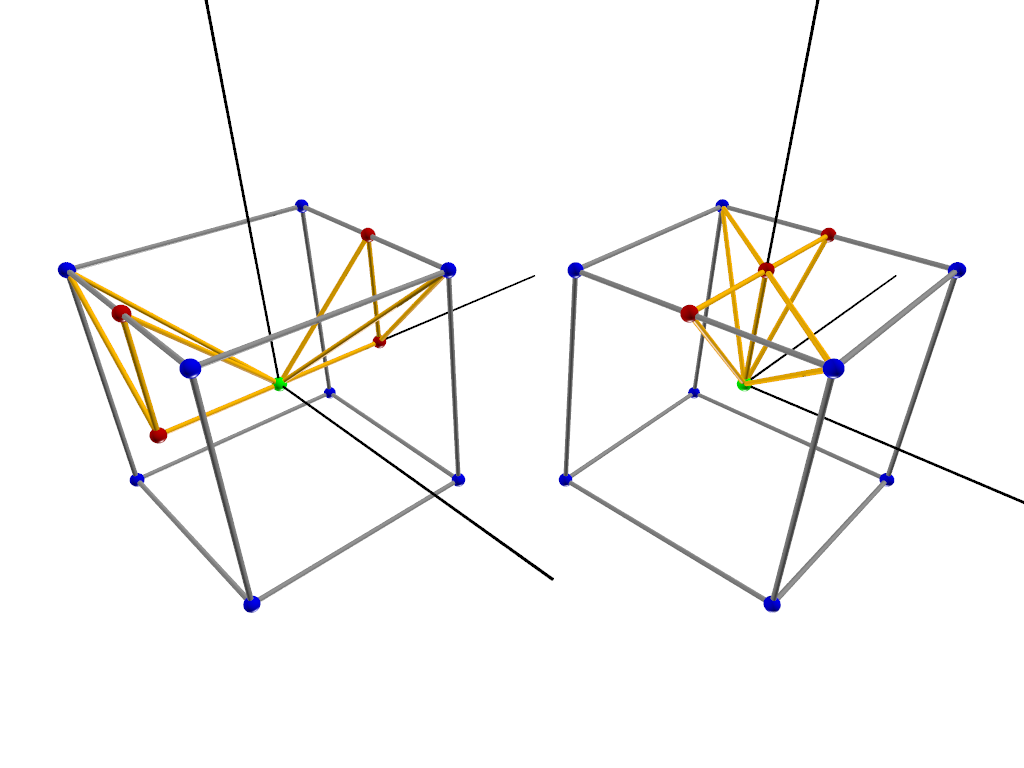}
\caption{The cube with center $(1,0,0,0)$, a tetrahedron and its image with respect to $h_1$ in two cases: the right one when $\sigma\cap h(\sigma) = \{0\}$ and the second when $\sigma\cap h(\sigma)=C(\e_1)$.}
\end{center}
\end{figure}

\noindent Assume that $\sigma\cap \tau = C(\e_1)$.
In this case $C(\e_1)^\vee = C(\e_1^\vee,\pm \e_2^\vee,\pm \e_3^\vee,\pm \e_4^\vee)$ and
$$\C[C(\e_1)^\vee\cap \Z^4]=\C[x_1,x_2^{\pm 1},x_3^{\pm 1},x_4^{\pm 1}].$$
The conditions to be satisfied by the coordinates of a fixed point of $h_1$ inside $X_{C(\e_1)}$ are $x_2,x_3=\pm1$.
If we define $\epsilon_2=\pm1$ and $\epsilon_3=\pm1$ such that $x_i=\epsilon_i$ for $i=2,3$, the equations for the fixed locus of $h_1$ on $\ti{Y}$ (restricted to $X_{C(\e_1)}$) are 
$$a\epsilon_2\epsilon_3x_1x_4+b(\epsilon_2\epsilon_3(x_1^2x_4+x_1x_4^2+x_1+x_4)+2(\epsilon_2+\epsilon_3)x_1x_4)=0.$$
For each choice of $\epsilon_2,\epsilon_3$ we have, generically, a smooth and irreducible curve that is not contained in the locus $x_1=0$. Our interest in this fact is justified because the points of $X_{C(\e_1)}$ with $x_1\neq 0$ are exactly the points of the torus. If we want to find new components for $\Fix(L_1)$, we have to search in the locus $x_1=0$. We have then show that there are no other components of $\Fix(L_1)$ in $X_{C(\e_1)}$. The projection of $L_1$ on $\Aut(\Z_4,\ti{\Sigma})$ acts transitively on ${\pm \e_i}$ so this shows that there are no more components of the fixed locus on 
$$X_{C(\e_1)}\cup X_{C(-\e_1)}\cup X_{C(\e_4)}\cup X_{C(-\e_4)}.$$

\noindent It remains to consider the case $(i,j)\in \{(1,4),(4,1)\}$. In total there are $8$ cones in $\ti{\Sigma}$ with these properties and they are arranged in $2$ orbits under the action of $M_{16}$. The orbits representatives are
$$C(\e_1,\e_1+\e_4)\quad \mbox{ and } \quad C(\e_1,\e_1-\e_4).$$
Assume that $\sigma\cap \tau = C(\e_1,\e_1+\e_4)$. The dual cone is $C(\e_1^\vee,\pm\e_2^\vee,\pm\e_3^\vee,-\e_1^\vee+\e_4^\vee)$
so that $$\C[C(\e_1,\e_1+\e_4)^\vee\cap \Z^4]=\C[x_1,x_2^{\pm 1},x_3^{\pm 1},x_4x_1^{-1}]:=\C[w_1,w_2^{\pm 1},w_3^{\pm 1},w_4].$$
In this affine patch, $\ti{Y}$ can be described as the zero locus of
\begin{multline*}
s:=aw_1w_2w_3w_4+b(w_1^2w_2w_3w_4^2 + w_1^2w_2w_3w_4 + w_1w_2^2w_3w_4 + \\
w_1w_2w_3^2w_4 + w_1w_2w_4 + w_1w_3w_4 + w_2w_3w_4 + w_2w_3).
\end{multline*}
The action of $h_1$ on these coordinates is given by
$$h:\left(w_1,w_2,w_3,w_4\right)\mapsto
\left(w_1,\frac{1}{w_2},\frac{1}{w_3},w_4\right)$$
so the fixed locus is given by $w_2,w_3\in \{\pm1\}$. These conditions give $4$ irreducible curves that are the closure of the ones we have found in the torus. The same argument applies when one search for new components in $X_{C(\e_1,\e_1-\e_4)}$.
\vspace{2mm}

\noindent In conclusion, $h_1$ has a fixed locus composed of $4$ irreducible curves and none of them is contained in a toric divisor. The only other element that has fixed points in $L_1$ is $g^4h_1$ but this is conjugated in $L_1$ to $h_1$. This means that none of the components of the fixed locus of $L_1$ is containtd in the intersection of $\ti{Y}$ and a toric divisor. It is also interesting to note that, by the main result of (\cite{Arm82}), the fundamental group of $\ti{Y}$ is $G/\spn{h_1,g^4h_1}\simeq \Z_4$.

\begin{thm}
\label{THM:RESOLUTION}
A crepant resolution of $Z$ is a smooth Calabi-Yau manifold with Hodge numbers $(\h^{1,1},\h^{1,2})=(8,4)$. 
\end{thm}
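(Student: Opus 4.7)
The strategy is to compute the Hodge numbers of a crepant resolution via the Chen--Ruan (orbifold) cohomology of $Z=\ti{Y}/\ti{G}$, invoking the Kontsevich--Batyrev theorem that the orbifold, equivalently stringy, Hodge numbers of a Gorenstein Calabi--Yau orbifold coincide with the ordinary Hodge numbers of any crepant resolution (existence being guaranteed in dimension three by Bridgeland--King--Reid). Since $\ti{G}=L_1$ acts symplectically and the elements with nonempty fixed locus fix unions of curves, every twisted sector contributes with age shift $(1,1)$, and the formula collapses to
\[
\h^{p,q}_{\mathrm{orb}}(Z)=\h^{p,q}(\ti{Y})^{L_1}+\sum_{[g]}\h^{p-1,q-1}\bigl(\Fix(g)/C_{L_1}(g)\bigr),
\]
the sum running over nontrivial conjugacy classes with nonempty fixed locus on $\ti{Y}$.

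First I would identify the surviving twisted sectors. From the defining relations of $M_{16}$ the three involutions $g^{4},h_1,g^{4}h_1$ split into the two classes $\{g^{4}\}$ and $\{h_1,g^{4}h_1\}$, while every other nonidentity element of $L_1$ has order $4$ or $8$ with a power equal to the free involution $g^{4}$; hence each acts freely on $\ti{Y}$, exactly as was observed for $g$ in the proof of Theorem \ref{THM:MIRROR}. The only surviving twisted sector is $[h_1]$, whose centralizer is $C(h_1)=\spn{g^{2},h_1}\simeq\Z_4\oplus\Z_2$. The untwisted contribution is the known $(\h^{1,1},\h^{1,2})(\ti{Y})^{L_1}=(5,1)$ computed in Section \ref{SEC:CHOSING} and in the proof of Theorem \ref{THM:MIRROR}.

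Next, I would describe $\Fix(h_1)/C(h_1)$. The four fixed curves $C_{\epsilon_2,\epsilon_3}$ are permuted by $g^{2}$ according to $(\epsilon_2,\epsilon_3)\mapsto(\epsilon_3,\epsilon_2)$, so the orbits are $\{C_{++}\}$, $\{C_{--}\}$ and $\{C_{+-},C_{-+}\}$. Hence $\Fix(h_1)/C(h_1)$ has three connected components, which combined with the untwisted contribution yields $\h^{1,1}_{\mathrm{orb}}(Z)=5+3=8$.

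The final step is to show that each of the three quotient components has genus one, which will give $\h^{1,2}_{\mathrm{orb}}(Z)=1+3=4$. Each $C_{\epsilon_2,\epsilon_3}$ is cut in the torus $(x_1,x_4)\in(\C^{*})^{2}$ by
\[
\bigl(a+2b(\epsilon_2+\epsilon_3)\bigr)+b(x_1+x_1^{-1}+x_4+x_4^{-1})=0,
\]
a Laurent polynomial with Newton polytope $\Ch(\pm\e_1,\pm\e_4)$, a rhombus containing a single interior lattice point; Khovanskii's genus formula then gives genus one for each $C_{\epsilon_2,\epsilon_3}$. The effective action of $\spn{g^{2}}\simeq\Z_4$ on $C_{\pm\pm}$ is $(x_1,x_4)\mapsto(x_4,x_1^{-1})$, and the effective action of $\spn{g^{4}}\simeq\Z_2$ on the representative $C_{+-}$ of the two-element orbit is $(x_1,x_4)\mapsto(x_1^{-1},x_4^{-1})$. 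The only candidate ramification points in each case are the four torus points with $x_1,x_4\in\{\pm1\}$, and substituting them into the Laurent polynomial produces the linear combinations $a,\ a\pm 4b,\ a\pm 8b$, all nonvanishing for generic $(a,b)$; so the actions are free, Riemann--Hurwitz preserves the genus, and the twisted sector contributes $3$ to $\h^{1,2}_{\mathrm{orb}}$. The main obstacle is precisely this genericity verification: an unforeseen stabilised point on any of the four fixed curves would alter the Riemann--Hurwitz balance and hence the count of $\h^{1,2}$; once the few elementary linear conditions are checked, the result $(\h^{1,1},\h^{1,2})=(8,4)$ follows.
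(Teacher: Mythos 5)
Your proposal is correct and follows the same overall architecture as the paper's proof: the Chen--Ruan orbifold cohomology formula, the observation that the only twisted sector surviving on $\ti{Y}$ is the conjugacy class of $h_1$ with age $1$, the identification of $\Fix(h_1)/C(h_1)$ with the three components $C_{++}/\Z_4$, $C_{--}/\Z_4$, $C_{+-}/\Z_2$, and the appeal to the free action of $\spn{g^2}$ (every element of order $4$ or $8$ powers to $g^4$, which avoids $\ti{Y}$). The one step where you genuinely diverge is the computation of the genera. The paper argues indirectly: the congruences $g(C_{\pm\pm})\equiv 1 \pmod 4$ and $g(C_{+-})$ odd coming from Riemann--Hurwitz for the free cyclic quotients, combined with the global Euler-characteristic count for $\pi\colon\ti{Y}\to\ti{Y}/\ti{G}$ forcing $g(C_{++})+g(C_{--})+g(C_{+-})+g(C_{-+})=4$, which together pin every genus to $1$. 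You instead compute each genus directly: restricting $s$ to $x_2=\epsilon_2$, $x_3=\epsilon_3$ gives a Laurent polynomial in $(x_1,x_4)$ with Newton polygon $\Ch(\pm\e_1,\pm\e_4)$ and a single interior lattice point, so each fixed curve is elliptic by the Newton-polygon genus formula, and the free quotients remain elliptic. Your route is more self-contained at this step (it does not need the global ramification bookkeeping on the threefold) and gives the genera of all four curves individually rather than only their sum plus congruences; the paper's route avoids the non-degeneracy check for the restricted Laurent polynomial, though your verification that the values $a$, $a\pm 4b$, $a\pm 8b$ are nonzero at the candidate critical points handles this, and these points are exactly the sixteen points of $\Fix(g^4)\cap T$ that the paper has already shown the generic section avoids. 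Both arguments reach $(\h^{1,1},\h^{1,2})=(5+3,1+3)=(8,4)$, and your closing appeal to the equality of orbifold and crepant-resolution Hodge numbers matches the paper's citation of Yasuda.
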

\begin{proof}
The formula for the orbifold cohomology of $Z=\ti{Y}/\ti{G}$ is the following:
$$\H_{orb}^{p,q}(Z)=\H^{p,q}(\ti{Y})^{\ti{G}}\oplus \bigoplus_{k\in CR_{\ti{G}}\setminus\{\id\}}\H^{p-\age(k),q-\age(k)}(\Fix(k)/C(k)),$$
where $CR_{\ti{G}}$ is a set whose elements are representant for the conjugacy classes of $\ti{G}$, $C(k)$ is the centralizer of $k$ in $\ti{G}$. In our particular case we can simplify the formula a lot because the only elements with fixed points are $h_1$ and $g^4h_1$ and they are conjugated. Being $h_1$ a symplectic involution one has that its age is $1$ so the formula becomes  
$$\H_{orb}^{p,q}(Z)=\H^{p,q}(\ti{Y})^{\ti{G}}\oplus \H^{p-1,q-1}(\Fix(h_1)/C(h_1)).$$
The shifts on the degree mean that we have a contribution to $\h^{1,1}=\h^{2,2}$ equal to the number of the components of $\Fix(h_1)/C(h_1)$ and a contribution to $\h^{1,2}=\h^{2,1}$ equal to the sum of the genus of the components of $\Fix(h_1)/C(h_1)$.
\vspace{2mm}

\noindent The centralizer of $h_1$ is the group generated by $g^2$ and $h$ that is abelian and isomorphic to $\Z_4\oplus\Z_2$. The action of $C(h_1)$ is not faithful; in fact, $h_1$ acts trivially. This means that $\Fix(h_1)/C(h_1)=\Fix(h_1)/\spn{g^2}$.
\vspace{6mm}

\noindent If, as before, $s$ is the section corresponding to $\ti{Y}$, denote 
$$C_{\pm\pm}:=V(s,x_2\mp1,x_3\mp1)$$
the four components of $\Fix(L_1)\cap T$. By definition it is clear that they are pairwise disjoint. We need to understand how $g^2$ acts on them. The action of $g^2$ on the coordinates is given by
$$g^2:\left(x_1,x_2,x_3,x_4\right)\mapsto
\left(\frac{1}{x_4},x_3,\frac{1}{x_2},x_1\right)$$
so $g^2(C_{++})=C_{++}, g^2(C_{--})=C_{--}$ and $g^2(C_{+-})=C_{-+}$. It is also of interest to note that $g^4(C_{\pm\mp})=C_{\pm\mp}$.
This means that
$$\frac{\Fix(L_1)}{\spn{g^2}}=\frac{C_{++}}{\spn{g^2}}\sqcup\frac{C_{--}}{\spn{g^2}}\sqcup\frac{C_{+-}}{\spn{g^4}}=
\frac{C_{++}}{\Z_4}\sqcup\frac{C_{--}}{\Z_4}\sqcup\frac{C_{+-}}{\Z_2}$$
and each of these quotients is free (because $g^4$, and hence $g^2$, doesn't have fixed points).
By the Riemann-Hurwitz formula we have
$$g(C_{++}/\Z_4)=\frac{g(C_{++})-1}{4}+1,\quad
g(C_{--}/\Z_4)=\frac{g(C_{--})-1}{4}+1$$
and
$$g(C_{+-}/\Z_2)=\frac{g(C_{+-})-1}{2}+1.$$
This, in particular, implies that $g(C_{++})$ and $g(C_{--})$ are congruent to $1$ modulo $4$ and that $g(C_{+-})=g(C_{-+})$ is odd.
The map $\pi:\ti{Y}\rightarrow \ti{Y}/\ti{G}$ is finite and it is ramified along $8$ curves. These are $C_{++},C_{--},C_{-+},C_{+-}$ (the fixed locus of $h_1$) and the images of these curves by $g$ (the fixed locus of $g^4h_1$). The two fixed locus are interchenged by $g$ so the branch locus is given by $\Fix(h_1)/C(h_1)$. We have already seen that is composed of $3$ curves. By writing explicitly the Riemann-Hurwitz formula, we obtain the following relation:
$$g(C_{++})+g(C_{--})+g(C_{+-})+g(C_{-+})=4.$$
This, and the arithmetic relations satisfied by $g(C_{\pm\pm})$, allow us to conclude that each component of the fixed locus and of the branch locus is an elliptic curve.
In conclusion we have
$$\h_{orb}^{1,1}(Z)=5+3\quad \mbox{ and }\quad \h_{orb}^{1,2}(Z)=1+3.$$
To conclude it remains to recall that the orbifold cohomology of a global quotient of a Calabi-Yau threefold by a finite symplectic group is the cohomology of one of its crepant resolution (cfr. \cite{Yas04}).
\end{proof}

\begin{remark}
\label{osette}
In \cite{BF11}, the authors studied a particular Calabi-Yau threefold. We will briefly recall its construction.
First of all, denote by $S$ the complete intersection in $\P^4$ of the following two quadrics:
$$f=x_0^2+x_1^2+x_2^2+x_3^2+x_4^2\quad\mbox{ and }g=x_0^2-ix_1^2-x_2^2+ix_3^2.$$
By simple calculation, $S$ is proved to be smooth and a del Pezzo surface of degree $4$.
Let $r$ and $t$ be the automorphisms of $\P^4\times \P^4$ such that 
$$\xymatrix@R=6pt{
(x,y) \ar@{|->}[r]^-r &  \left((x_0: x_1: -x_2: x_3: -x_4), (y_0: y_1: -y_2: y_3: -y_4)\right) \\
(x,y) \ar@{|->}[r]^-t &  \left((y_0: y_1: -y_2: -y_3: y_4), (x_0: x_1: x_2: x_3: x_4)\right). 
}$$
The group $G=\spn{r,t}\simeq \Z_4\times \Z_2$ acts on the product $W=S\times S$. Moreover, the generic invariant section of the anticanonical bundle of $W$ is smooth and the fixed locus of $G$ is disjoint from it. Hence, the zero locus $T$ of such a section is a smooth Calabi-Yau threefold and admits a free action of $G$. Call $H$ one of the cyclic subgroups of $G$ of order $4$. $H$ acts freely on $T$ as well.
\vspace{2mm}

\noindent Our interest in such variety is that the quotient $T/H$ is a smooth Calabi-Yau with Hodge numbers $(4,8)$. This implies that the Calabi-Yau $Z$ of Theorem (\ref{THM:RESOLUTION}) yields an explicit example of a Hodge-theoretic mirror for $T/G$. Both these varieties have the same fundamental group.
\end{remark}

\section{Hodge Theoretic mirrors for some non-maximal admissible pairs in $(\P^1)^4$}
\label{SEC:MIR}

In the previous section we have seen that if we start from a maximal admissible pair $(Y,G)$ in $X$, we are not able to find an admissible pair $(\ti{Y},\ti{G})$ in any $\ti{X}$ obtained from a maximal projective subdivision of $\du{\Sigma}$ such that $Z=Y/G$ and $\ti{Z}=\ti{Y}/\ti{G}$ are Hodge-Theoretic mirror Calabi-Yau. If we, instead, start from a non-maximal admissible pair this is possible in some cases.
\vspace{2mm}

\noindent We recall that in (\cite{BFNP13}) it is shown that two groups that are part of two admissible pairs in $X$ are isomorphic if and only if they are conjugated in $\Aut(X)$. So there is no ambiguity by writing $(Y,\Z_8)$. It is also shown that the quotients associated to the admissible pairs $(Y,\Z_8),(Y,\Z_4)$ and $(Y,\Z_2)$ in $X$ have respectively Hodge numbers $(1,9)$, $(2,18)$ and $(4,36)$. For these quotients, each fundamental group is isomorphic to the group of the admissible pair. We have all the information to prove the following theorem.

\begin{thm}
\label{THM:MIRROR2}
Let $\ti{X}$ denote the flag-resolution of $\du{X}$ and let $\ti{Y}$ be a smooth Calabi-Yau that is $\spn{g}-$invariant (such is the one used in Theorem (\ref{THM:MIRROR})). Then $(\ti{Y},\spn{g})$, $(\ti{Y},\spn{g^2})$ and $(\ti{Y},\spn{g}^4)$ are admissible pairs in $\ti{X}$ whose associated quotients satisfy:
\begin{itemize}
\item $\ti{Z}_0:=\ti{Y}/\spn{g}=\ti{Y}/\Z_8$ has Hodge-pair $(9,1)$;
\item $\ti{Z}_1:=\ti{Y}/\spn{g^2}=\ti{Y}/\Z_4$ has Hodge-pair $(18,2)$;
\item $\ti{Z}_2:=\ti{Y}/\spn{g^4}=\ti{Y}/\Z_2$ has Hodge-pair $(36,4)$.
\end{itemize}
In particular, $\ti{Z_0},\ti{Z_1}$ and $\ti{Z_2}$ are respectively Hodge-Theoretic mirrors for the quotients associated to the admissible pairs $(Y,\Z_8),(Y,\Z_4)$ and $(Y,\Z_2)$ in $X$. Each pairs of mirrors has the same fundamental group. 
\end{thm}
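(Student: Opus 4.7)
The plan is to verify three claims for each $G\in\{\langle g\rangle,\langle g^2\rangle,\langle g^4\rangle\}$: freeness of the action on $\ti Y$, the announced Hodge numbers of $\ti Z_i$, and the cyclic fundamental group. The unifying algebraic input is the minimal polynomial of $g$ on $\R^4$: inspection of (\ref{EQ:GROPGENS}) gives $g^4=-\id$, and since $g$ has order $8$ this minimal polynomial equals $t^4+1$. Consequently each of $g,g^2,g^4$ has zero fixed subspace in $\R^4$. For freeness, every nontrivial subgroup of $\langle g\rangle$ contains $g^4$, so each of our three groups acts freely on $\ti Y$ iff $g^4$ does; and Section \ref{SEC:ANTI} established precisely this (its fixed locus in the torus consists of the $16$ points with $x_i\in\{\pm1\}$, all missed by the generic invariant section $s$, while the proof of Lemma \ref{LEM:FIXPINT} uses only $g^4=-\id$ and so forbids fixed points outside $T$). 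Combined with the simple connectedness of $\ti Y$ recalled in the proof of Theorem \ref{THM:MIRROR}, this identifies $\pi_1(\ti Z_i)\cong G$, matching the $\pi_1(Y/\Z_{|G|})$ recorded at the start of this section.

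For $h^{2,1}(\ti Z_i)=h^{2,1}(\ti Y)^G$, recall from the proof of Theorem \ref{THM:MIRROR} that $h^{2,1}(\ti Y)=h^{2,1}_{poly}(\ti Y)$, so invariantly
\[h^{2,1}(\ti Y)^G=\dim L(\du\Delta)^G-1-\dim\mathrm{Lie}(T)^G.\]
The eigenvalue remark forces $\mathrm{Lie}(T)^G=0$ in every case. On $L(\du\Delta)=\C\langle 1,x_1^{\pm1},\ldots,x_4^{\pm1}\rangle$, the description of the $g$-action in Section \ref{SEC:CHOSING} shows that $g$ fixes the constant $1$ and cycles the eight characters $\{x_i^{\pm1}\}$ as a single $8$-cycle, so the $g$-eigenvalues on $L(\du\Delta)$ are $1$ with multiplicity $2$ and each nontrivial eighth root of unity with multiplicity $1$. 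Collecting those eigenvalues that are $|G|$-th roots of unity yields $\dim L(\du\Delta)^G=2,3,5$ for $G=\langle g\rangle,\langle g^2\rangle,\langle g^4\rangle$, whence $h^{2,1}(\ti Z_i)=1,2,4$.

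For $h^{1,1}$, combine the $G$-invariant forms of (\ref{EXSEQ:PICTENSOR}) and (\ref{EQ:exseqinv}) (exactness of $(-)^G$ on $\Q$-vector spaces for finite $G$) to obtain
\[h^{1,1}(\ti Y)^G=\dim(\Q^{80})^G-\dim(\Q^4)^G-\dim\Ker^G.\]
Again $(\Q^4)^G=0$. Since no $g^{2^i}$ has a nonzero fixed vector in $\R^4$, no ray of $\ti\Sigma(1)$ has nontrivial stabilizer in $G$, so every $G$-orbit on the $80$ rays has maximal size $|G|$ and $\dim(\Q^{80})^G=80/|G|$. A direct inspection of the matrix of $g$ shows that $g$ cycles the eight rays $\{\pm e_j\}$ spanning $\Ker$ as the single $8$-cycle $e_1\mapsto-e_3\mapsto-e_4\mapsto e_2\mapsto-e_1\mapsto e_3\mapsto e_4\mapsto-e_2\mapsto e_1$, giving $\dim\Ker^G=8/|G|$. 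Therefore $h^{1,1}(\ti Z_i)=(80-8)/|G|=9,18,36$. Comparing with the Hodge pairs $(1,9),(2,18),(4,36)$ of the admissible-pair quotients in $X$ exhibits each $\ti Z_i$ as a Hodge-theoretic mirror of the corresponding $Z_i$, completing the statement.

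The only piece of bookkeeping that might be delicate is the orbit structure of $\langle g\rangle$ on the $80$ rays of the flag subdivision; it is avoided entirely by the minimal-polynomial observation, which uniformly rules out small stabilizers on both $\mathrm{Lie}(T)$ and $\ti\Sigma(1)$ and removes any need for ray-by-ray enumeration.
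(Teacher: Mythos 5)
Your argument is correct and arrives at the same numbers as the paper, but it reorganizes two of the three computations. The freeness/admissibility step and the identification of $\pi_1(\ti{Z}_i)$ coincide with the paper's proof (which simply invokes the earlier sections together with the simple connectedness of $\ti{Y}$); your reduction of freeness to the single element $g^4$ via the subgroup lattice of $\Z_8$, and the remark that Lemma \ref{LEM:FIXPINT} really only uses $g^4=-\id$, make explicit what the paper leaves implicit. For $\h^{1,1}$ the paper says only that it ``adapts the strategy of Section \ref{SEC:CHOSING}'', i.e.\ the invariant exact sequences (\ref{EXSEQ:PICTENSOR}) and (\ref{EQ:exseqinv}), which in that section were evaluated by computer; your observation that the minimal polynomial of $g$ is $t^4+1$, so that no nontrivial power of $g$ fixes a nonzero vector, hence every $\spn{g^{2^i}}$-orbit on the $80$ rays and on the $8$ cube-centre rays spanning $\Ker$ is regular, turns this into the clean hand computation $\h^{1,1}(\ti{Y})^G=(80-8)/|G|$ and removes the need for any ray-by-ray enumeration. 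The genuine divergence is in $\h^{1,2}$: the paper gets it essentially for free from the Euler characteristic of the free quotient, $\chi(\ti{Z}_i)=128/|\spn{g^{2^i}}|=2(\h^{1,1}(\ti{Z}_i)-\h^{1,2}(\ti{Z}_i))$, whereas you compute $\h^{2,1}(\ti{Y})^G$ directly as $\dim L(\du{\Delta})^G-1-\dim\mathrm{Lie}(T)^G$ via the eigenvalue decomposition of the permutation action of $g$ on $\{1,x_i^{\pm 1}\}$; this is exactly the method the paper applies to $L_1$ in the proof of Theorem \ref{THM:MIRROR}, here extended to the three cyclic subgroups. Your route costs slightly more bookkeeping but identifies the invariant deformations explicitly, and the agreement of the two methods serves as a consistency check; both arguments are valid.
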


\begin{proof}
We have already shown that $\ti{Y}$ is a smooth Calabi-Yau threefold which is invariant with respect to $L_1=\spn{g,h_1}$ and that $g$, an automorphism of order $8$, acts freely on $\ti{Y}$ so that $(\ti{Y},\spn{g^{2^i}})$ is an admissible pair in $\ti{X}$ for $i\in\{0,1,2\}$. To obtain 
$$\h^{1,1}(\Z_i)=\h^{1,1}(\ti{Y})^{\spn{g^{2^i}}}$$ we adapt the same strategy as that used in Section (\ref{SEC:CHOSING}). Since the action of the group is free, we get
$$\chi(\ti{Z}_i)=128/\left|\spn{g^{2^i}}\right|=16\cdot 2^{i}$$
and $\h^{1,2}(\ti{Z_i})=\h^{1,1}(\Z_i)-8\cdot 2^{i}$. As noted in the proof of Theorem (\ref{THM:MIRROR}), $\ti{Y}$ is simply connected. The claim about the fundalmental group follows from the fact that the quotients are free.
\end{proof}




\end{document}